\providecommand{\U}[1]{\protect\rule{.1in}{.1in}}
\theoremstyle{plain}
\newtheorem{corollary}{Corollary}
\newtheorem{lemma}{Lemma}
\newtheorem{remark}{Remark}
\newtheorem{theorem}{Theorem}
\numberwithin{equation}{section}
\newcommand{\mau}{\mathcal{M}^+_{\lambda,\Lambda}(D^2u)}
\newcommand{\mav}{\mathcal{M}^+_{\lambda,\Lambda}(D^2v)}
\begin{document}
\title[Liouville theorems and fully nonlinear elliptic equations]{Liouville-type theorems  for fully nonlinear elliptic equations and systems  in half spaces}
\author{Guozhen Lu and Jiuyi Zhu  }
\address{Guozhen Lu \\Department of Mathematics\\
Wayne State University\\
Detroit, MI 48202, USA\\
Emails: gzlu@math.wayne.edu}
\address{ Jiuyi Zhu  \\
Department of Mathematics\\
Wayne State University\\
Detroit, MI 48202, USA\\
Emails:  jiuyi.zhu@wayne.edu }
\thanks{\noindent Research is partly supported by a US NSF grant.\\ \noindent Revised version on July 23, 2012.}
\date{}
\subjclass{35B53, 35J60, 35B44, } \keywords {Fully nonlinear
elliptic equation, Pucci's extremal operators, Supersolutions, Liouville-type theorem,  Doubling
property.} \dedicatory{ }

\begin{abstract}In \cite{LWZ}, we established Liouville-type theorems and decay estimates for solutions of a class of high order elliptic
equations and systems without the boundedness assumptions on the solutions. In this paper, we continue our work in
\cite{LWZ} to investigate the role of boundedness assumption in
proving Liouville-type theorems for fully nonlinear equations. We
remove the boundedness assumption of solutions which was often required in
the proof of Liouville-type theorems for fully nonlinear elliptic
equations or systems in half spaces. We also prove the
Liouville-type theorems for supersolutions of a system of fully
nonlinear equations with Pucci extremal operators in half spaces.

\end{abstract}

\maketitle
\section{Introduction}
The article is devoted to the study of Liouville-type theorems for
nonnegative viscosity solution or supersolutions of a class of fully
nonlinear uniformly elliptic equations and systems in a half space
$\mathbb R^n_+$, i.e. either
\begin{equation}
\left\{
\begin{array}{lll}
F(x, D^2u)+u^p= 0 \quad \quad &\mbox{in} \ \mathbb R^n_+,\\
u=0 \quad \quad &\mbox{on} \ \partial  \mathbb R^n_+
\end{array}
\right. \label{bin}
\end{equation}
or
\begin{equation}
\left\{
\begin{array}{lll}
F(x, D^2u)+v^p =0 \quad \quad &\mbox{in} \ \mathbb R^n_+,\\
F(x, D^2v)+u^q =0\quad \quad &\mbox{in}  \ \mathbb R^n_+
\end{array}
\right. \label{bin1}
\end{equation}
where $\mathbb R^n_+=\{x=(x', x_n)\in \mathbb R^{n-1}\times \mathbb
R| x_n>0\}$ with $n\geq 2$. A continuous function $F: \mathbb
R^n\times S_n\to \mathbb R $ is referred as an uniformly elliptic
equation with ellipticity $0<\lambda\leq \Lambda$ if for all $M,
P\in S_n$ with $P\geq 0$ (nonnegative definite), it holds that
\begin{equation}
\lambda tr(P)\leq F(x, M+P)-F(x,M)\leq \Lambda tr(P), \label{unf}
\end{equation}
where $S_n$ is the space of all real symmetric $n\times n$ matrix, and $tr(P)$ is the trace of
$P\in S_n$.

 Liouville-type theorems are
powerful tools in proving a priori bounds for nonnegative solutions
in a bounded domain. They are widely applied in obtaining a priori
estimate for solutions of elliptic equations in the literature.
Using the "blow-up" method (also called rescaling argument)
\cite{GS}, an equation in a bounded domain will blow up into another
equation in the whole Euclidean space or a half space. With the aid
of the corresponding Liouville-type theorem in the Euclidean space
$\mathbb R^n$ and half space $\mathbb R^n_+$ and a contradiction
argument, the a priori bounds could be readily derived. Moreover,
the existence of nonnegative solutions to elliptic equations is
established by the topological degree method using a priori
estimates (see. e.g. \cite{DLN}) .

 In this paper we mainly consider the model in the case that $F(x,
 D^2u)=\mathcal{M}^+_{\lambda,\Lambda}(D^2u)$. Here $
 \mathcal{M}^+_{\lambda,\Lambda}(D^2u)$ is the Pucci extremal
 operator with parameters $0<\lambda\leq\Lambda$, defined by
 $$\mathcal{M}^+_{\lambda,\Lambda}(M)=\Lambda\sum_{e_i>0}
 e_i+\lambda \sum_{e_i<0}
 e_i$$
for any symmetric $n\times n$ matrix $M$, where $e_i=e_i(M), i=1,
\cdots, n$, denotes the eigenvalue of $M$. While
$\mathcal{M}^-_{\lambda,\Lambda}(M)$ is defined as
$$\mathcal{M}^-_{\lambda,\Lambda}(M)=\lambda\sum_{e_i>0}
 e_i+\Lambda \sum_{e_i>0}
 e_i.$$
Pucci's operators are extremal in the sense that
$$\mathcal{M}^+_{\lambda,\Lambda}(M)=\sup_{A\in\mathcal{A}_{\lambda,
\Lambda}}tr(AM),$$
$$\mathcal{M}^-_{\lambda,\Lambda}(M)=\inf_{A\in\mathcal{A}_{\lambda,
\Lambda}}tr(AM)$$ with
$$\mathcal{A}_{\lambda, \Lambda}=\{ A \in S_n: \lambda|\xi|^2\leq
A\xi\cdot\xi^T\leq \Lambda|\xi|^2, \ \forall \xi\in \mathbb R^n
\}.$$
 If
the operator $F$ is uniformly elliptic with ellipticity constant
$0<\lambda\leq \Lambda$, it results in
$$\mathcal{M}^-_{\lambda,\Lambda}(M)\leq F(x, M)\leq
\mathcal{M}^+_{\lambda,\Lambda}(M)$$ when $F(x,O)=0$. We refer to
the monograph \cite{CC} for more details on these operators. Notice
that $\mathcal{M}^+_{\lambda,\Lambda}$ and
$\mathcal{M}^-_{\lambda,\Lambda}$ are not in the divergence form.

When $\lambda=\Lambda=1$, $\mathcal{M}^{\pm}_{\lambda,\Lambda}$
coincide with the Laplace operators. Then (\ref{bin}) with $F(x,
D^2u)$ replaced by  $\mathcal{M}^{\pm}_{\lambda,\Lambda}(D^2u)$
becomes the
\begin{equation}
\triangle u+u^p=0 \quad \quad \mbox{in} \ \mathbb R^n_+. \label{tri}
\end{equation}
It is well known that (\ref{tri}) does not have positive
supersolutions in the half space for $1<p<\frac{n+1}{n-1}$, and does
not have nonnegative solution for $1<p<\frac{n+2}{n-2}$ with $u$
vanishing on the boundary.

In view of these results for the semilinear equation (\ref{tri}), it
would be interesting to understand the structure of solutions for
(\ref{bin}) and (\ref{bin1}). Unlike in the case of the semilinear
equations, the popular technique of Kelvin transform with moving
plane method is no longer available. We also note that there is no
variational structure for fully nonlinear elliptic equations, even
for the Pucci extremal operators. Those impose new difficulties for
studying Liouville-type results. In \cite{CL}, Cutri and Leoni
establish the following non-existence results in the spirit of the
Hadamard three circle theorem \cite{PW}. In particular, they have
also shown that the critical exponent
$$p^+:=\frac{\tilde{n}}{\tilde{n}-2}$$ is optimal for supersolutions
in (\ref{lio}), where
$$\tilde{n}=\frac{\lambda}{\Lambda}(n-1)+1.$$ It exhibits a nontrivial solution
for (\ref{lio}) if $p>p^+$. Namely, it is stated as the following
lemma.

\begin{lemma}
Assume that $n\geq 3$. If $1<p\leq p^+$ or ($1<p<\infty$ if
$\tilde{n}\leq 2$), then the only viscosity supersolution of
\begin{equation}
\left \{\begin{array}{ll} \mau+u^p =0 \quad \ &\mbox{in} \ \
\mathbb{R}^n,\\
u\geq 0 \quad \ &\mbox{in} \ \ \mathbb{R}^n \\
\end{array} \right.
\label{lio}
\end{equation}
is $u\equiv0$. \label{eur}
\end{lemma}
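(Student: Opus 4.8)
The plan is to prove this non-existence result for supersolutions of the Pucci equation in $\mathbb{R}^n$ via a rescaled integral (or average) estimate combined with a comparison argument against radial barriers. First I would exploit the radial structure: for a nonnegative viscosity supersolution $u$, one introduces the spherical average or, following Cutri--Leoni, works with the function $m(r) = \min_{|x| = r} u(x)$ (or the closely related quantity obtained by integrating against the extremal operator's fundamental solution). The uniform ellipticity bound $\mathcal{M}^+_{\lambda,\Lambda}(D^2u) \le \mathcal{M}^+_{\lambda,\Lambda}(D^2u)$ lets one dominate $\mathcal{M}^+$ acting on radial functions by a one-dimensional ODE operator: for $\phi = \phi(r)$ radial and monotone, $\mathcal{M}^+_{\lambda,\Lambda}(D^2\phi)$ equals either $\Lambda(\phi'' + \frac{n-1}{r}\phi')$ or $\lambda \phi'' + \Lambda\frac{n-1}{r}\phi'$ depending on the sign of $\phi''$, and in the regime relevant to supersolutions this produces the effective dimension $\tilde{n} = \frac{\lambda}{\Lambda}(n-1) + 1$, so that the natural comparison operator behaves like the Laplacian in dimension $\tilde{n}$.

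Next I would set up the contradiction argument. Suppose $u \not\equiv 0$; then by the strong maximum principle $u > 0$ everywhere, so $m(r) > 0$ for all $r$. The equation forces $\mathcal{M}^+_{\lambda,\Lambda}(D^2 u) = -u^p \le 0$, i.e. $u$ is a supersolution of the homogeneous equation $\mathcal{M}^+_{\lambda,\Lambda}(D^2 w) = 0$; comparing with the fundamental-type radial solution $r^{2-\tilde{n}}$ (or $\log r$ when $\tilde{n} = 2$, and here one uses the hypothesis $\tilde{n} > 2$, equivalently $n \ge 3$ with $\lambda/\Lambda$ not too small, versus the degenerate case $\tilde{n} \le 2$ handled separately) on annuli and letting the radius go to infinity yields that $m(r)$ is bounded below by a positive constant, and more precisely one extracts a decay/growth estimate of the form $m(r) \ge c\, r^{2-\tilde{n}}$ is impossible to improve — rather, the key is to feed the lower bound $u \ge m(r)$ back into the nonlinearity. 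One derives a differential inequality for (a suitable average of) $u$ along the lines of $(r^{\tilde{n}-1} \bar{u}')' \le -c\, r^{\tilde{n}-1} \bar{u}^p$ in the viscosity sense, integrate twice, and obtain that the only way this can hold with $\bar u \ge 0$ on all of $(0,\infty)$ when $1 < p \le \frac{\tilde n}{\tilde n - 2}$ is $\bar u \equiv 0$.

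The main obstacle I expect is making the radial-average argument rigorous in the viscosity framework: one cannot simply differentiate $u$, so the reduction to the ODE inequality must be done by comparison with carefully chosen radial test functions (smooth barriers on annuli) and by invoking the comparison principle for $\mathcal{M}^+_{\lambda,\Lambda}$, which is available since the Pucci operator is proper and uniformly elliptic. A secondary technical point is handling the critical exponent $p = p^+$ itself, where the naive integration is borderline and one needs a logarithmic refinement (a Hardy-type or iterated integral estimate) to close the gap; the subcritical case $1 < p < p^+$ is strictly easier. The degenerate case $\tilde n \le 2$ (so any $p > 1$) is handled by the observation that $\mathcal{M}^+_{\lambda,\Lambda}(D^2 w) = 0$ then has no positive radial supersolution decaying at infinity except constants, and the nonlinear term $-u^p$ then forces $u$ down to zero; since the statement of Lemma~\ref{eur} permits us to cite Cutri--Leoni \cite{CL} directly, the cleanest route is simply to quote their Theorem, but the sketch above indicates how the proof runs should a self-contained argument be desired.
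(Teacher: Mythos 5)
The paper does not prove this lemma; it is quoted verbatim from Cutri--Leoni \cite{CL}, and you correctly identified that the intended route is simply to cite that paper. Your sketch of the underlying argument is broadly faithful to the Cutri--Leoni proof: define $m(r)=\min_{|x|=r}u$, establish a Hadamard three-spheres concavity of $m$ with respect to $r^{2-\tilde n}$ via comparison against the radial fundamental solution (whose existence is exactly what fixes the effective dimension $\tilde n$), deduce the lower bound $m(r)\geq c\,r^{2-\tilde n}$, then feed $u\geq m(2r)$ back into the nonlinearity by comparing with a quadratic subsolution on $B_{2r}$ to obtain $m(r)\lesssim r^{-2/(p-1)}$, and close the contradiction for $p<p^+$ with a separate refinement at $p=p^+$. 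One small imprecision worth noting: for a radial $\phi$ the form of $\mathcal{M}^+_{\lambda,\Lambda}(D^2\phi)$ is governed by the signs of \emph{both} $\phi''$ and $\phi'/r$, and the relevant case for the fundamental-solution barrier ($\phi''>0$, $\phi'<0$) gives $\Lambda\phi''+\lambda\frac{n-1}{r}\phi'$, which is not among the two combinations you listed; this is what makes $r^{2-\tilde n}$ annihilate $\mathcal{M}^+$. Also, the one-line ``integrate the differential inequality twice'' step cannot be carried out literally in the viscosity framework and must, as you yourself observe, be replaced by barrier comparisons on balls or annuli; in Cutri--Leoni this is done by comparing $u$ with explicit quadratic and power-type subsolutions rather than by any genuine ODE integration.
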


With the help of moving plane method and the above Liouville-type
theorem, Quaas and Sirakov \cite{QS} make use of the idea of
\cite{D} and obtain a Liouville-type result in a half space. They
first prove the solution of (\ref{hal}) is non-decreasing in $x_n$
direction, then it leads to the same problem in $\mathbb R^{n-1}$
after a limiting process, which allows them to use Lemma \ref{eur}. Under
the boundedness assumption, they show that

\begin{lemma}
Let $n\geq 3$ and $\tilde
{p}^+:=\frac{\lambda(n-2)+\Lambda}{\lambda(n-2)-\Lambda}$. Then the
equation
\begin{equation}
\left \{\begin{array}{lll} \mau+u^p =0 \quad \ &\mbox{in} \ \
\mathbb{R}^n_+, \\
u\geq 0 \quad \ &\mbox{in} \ \ \mathbb{R}^n_+,  \\
u=0 \quad \ &\mbox{in} \ \ \partial\mathbb{R}^n_+ \label{hal}
\end{array} \right.
\end{equation}
has no nontrivial bounded solution, provided $1< p\leq \tilde{p}^+$
and $\lambda(n-2)>\Lambda$ ( or $1<p<\infty$ if $\lambda(n-2)\leq
\Lambda$). \label{tha}
\end{lemma}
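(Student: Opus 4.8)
The plan is to follow the reduction of Quaas and Sirakov \cite{QS}, which in turn rests on an idea of Dancer \cite{D}: use the method of moving planes in the $x_n$-direction to show that every bounded solution $u$ of (\ref{hal}) is non-decreasing in $x_n$; pass to the limit $x_n\to+\infty$ to obtain a bounded nonnegative viscosity solution of the Pucci equation on $\mathbb R^{n-1}$; and then invoke Lemma \ref{eur} (applied in dimension $n-1$) to force that limit, and hence $u$, to vanish identically.

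\textbf{Step 1: monotonicity via moving planes.} For $\mu>0$ set $\Sigma_\mu=\{x\in\mathbb R^n_+:0<x_n<\mu\}$, let $x^\mu=(x',2\mu-x_n)$ be the reflection of $x$ in $T_\mu=\{x_n=\mu\}$, and put $w_\mu(x)=u(x^\mu)-u(x)$. Since $\mathcal M^+_{\lambda,\Lambda}$ is invariant under the reflection, the reflected function $x\mapsto u(x^\mu)$ solves the same equation, and subtracting the two equations gives, in the viscosity sense,
\[
\mathcal M^-_{\lambda,\Lambda}(D^2 w_\mu)+c_\mu(x)\,w_\mu\le 0\qquad\text{in }\Sigma_\mu,
\]
where $c_\mu(x)=\bigl(u(x^\mu)^p-u(x)^p\bigr)/\bigl(u(x^\mu)-u(x)\bigr)$ satisfies $0\le c_\mu\le p\|u\|_{L^\infty}^{p-1}$ by the mean value theorem; this is where the boundedness of $u$ enters in an essential way. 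On $\partial\Sigma_\mu$ one has $w_\mu=u(x',2\mu)\ge0$ on $\{x_n=0\}$ and $w_\mu=0$ on $T_\mu$, and $w_\mu$ is bounded. For $\mu$ small the slab $\Sigma_\mu$ has small width, so the ABP-type maximum principle in narrow domains for $\mathcal M^-_{\lambda,\Lambda}$ --- which remains valid in the unbounded slab precisely because $w_\mu$ is bounded --- forces $w_\mu\ge0$ in $\Sigma_\mu$. Set $\mu^*=\sup\{\mu>0:w_{\mu'}\ge0\text{ in }\Sigma_{\mu'}\text{ for all }\mu'\le\mu\}$. Suppose $\mu^*<\infty$; by continuity $w_{\mu^*}\ge0$, and the strong maximum principle for $\mathcal M^-_{\lambda,\Lambda}$ leaves two alternatives. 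If $w_{\mu^*}>0$ in the interior of $\Sigma_{\mu^*}$, the usual continuation argument --- local uniform positivity on a large compact set, together with the narrow-domain maximum principle on the thin region added when $\mu$ is increased slightly --- contradicts the maximality of $\mu^*$. If instead $w_{\mu^*}\equiv0$, then $u(x',2\mu^*)=u(x',0)=0$, so the nonnegative function $u$ attains the value $0$ at an interior point; since $\mathcal M^+_{\lambda,\Lambda}(D^2u)=-u^p\le0$, the strong minimum principle yields $u\equiv0$. Thus either $u\equiv0$ (and we are done) or $\mu^*=+\infty$, which is exactly the assertion that $u(x',t)\ge u(x',s)$ whenever $t\ge s\ge0$.

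\textbf{Step 2: limit equation in $\mathbb R^{n-1}$ and conclusion.} Assume $\mu^*=+\infty$. Since $u$ is bounded, $u^p\in L^\infty$, and Caffarelli's interior estimates \cite{CC} give $u\in C^{1,\alpha}_{\mathrm{loc}}(\mathbb R^n_+)$ with bounds that are invariant under vertical translations. The translates $u_k(x',x_n)=u(x',x_n+k)$ increase in $k$ and are bounded, so they converge locally uniformly to $\bar u(x'):=\sup_{t>0}u(x',t)$, which is independent of $x_n$ by monotonicity and which --- by stability of viscosity solutions under local uniform convergence --- is a nonnegative viscosity solution of $\mathcal M^+_{\lambda,\Lambda}(D^2\bar u)+\bar u^p=0$ in $\mathbb R^n$. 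Regarding $\bar u$ as a function on $\mathbb R^n$ with a degenerate $x_n$-direction, a vanishing eigenvalue does not alter the value of the Pucci operator, so $\mathcal M^+_{\lambda,\Lambda}(D^2\bar u)=\mathcal M^+_{\lambda,\Lambda}(D^2_{x'}\bar u)$ and $\bar u\ge0$ is a viscosity supersolution of $\mathcal M^+_{\lambda,\Lambda}(D^2_{x'}\bar u)+\bar u^p=0$ in $\mathbb R^{n-1}$. The relevant critical exponent is $\tilde n'/(\tilde n'-2)$ with $\tilde n'=\frac{\lambda}{\Lambda}(n-2)+1$; one checks directly that $\tilde n'/(\tilde n'-2)=\tilde p^+$ and that $\lambda(n-2)>\Lambda$ is equivalent to $\tilde n'>2$. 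Hence the hypothesis $1<p\le\tilde p^+$ (respectively $1<p<\infty$ when $\lambda(n-2)\le\Lambda$) places us exactly within the scope of Lemma \ref{eur} in dimension $n-1$, so $\bar u\equiv0$ (when $n=3$ the reduced dimension is $2$, where the corresponding two-dimensional nonexistence result, valid for every $p>1$, applies). Finally, monotonicity and nonnegativity give $0\le u(x',x_n)\le\lim_{t\to+\infty}u(x',t)=\bar u(x')=0$, so $u\equiv0$, as claimed.

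\textbf{Main obstacle.} The hard part is Step 1: running the moving plane method in the \emph{unbounded} slabs $\Sigma_\mu$. The boundedness of $u$ is used both to keep the zeroth-order coefficient $c_\mu$ bounded and to validate the maximum principle for $\mathcal M^-_{\lambda,\Lambda}$ in an unbounded narrow domain, and it enters at the start of the procedure as well as in the continuation step at $\mu^*$; this is precisely the assumption one would like to remove. A secondary technical point, absent in the semilinear setting, is that $\mathcal M^+_{\lambda,\Lambda}$ is nonlinear, so $w_\mu$ only satisfies the one-sided inequality displayed above rather than a linear equation, which is why the argument must be carried out throughout with $\mathcal M^-_{\lambda,\Lambda}$ and its ABP estimate.
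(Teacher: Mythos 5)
Your proposal follows precisely the strategy that the paper attributes to Quaas--Sirakov \cite{QS} (and which the paper itself does not reprove, merely cites): moving planes in the $x_n$-direction to obtain monotonicity, vertical translation to a limit profile independent of $x_n$, reduction of the Pucci operator to dimension $n-1$, and an application of Lemma~\ref{eur} after checking that $\tilde p^+$ is exactly the Cutri--Leoni exponent in dimension $n-1$. The sketch is sound, including the correct use of $\mathcal M^-_{\lambda,\Lambda}$ for the difference inequality and the observation that boundedness is what makes $c_\mu$ and the ABP estimate in the unbounded slab work; the only loose end is the side remark for $n=3$, where the limit lives in $\mathbb R^2$ and one must invoke the two-dimensional version of the whole-space nonexistence result (also due to Cutri--Leoni) rather than Lemma~\ref{eur} literally as stated, which assumes ambient dimension at least $3$.
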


Note that $\tilde {p}^+> p^+$ for $\lambda(n-2)>\Lambda$.   We are
interested in the boundedness assumption in Lemma \ref{tha}. As we
know, boundedness assumptions are often imposed in deriving such
Liouville-type theorem in half spaces. Using the Doubling Lemma
recently developed in \cite{PQS} (see  Section 2) and a blow-up
technique, we indeed show that the boundedness assumption is
unnecessary for such equations. Similar ideas have been  applied to
derive Liouville type theorems for solutions to  higher order
elliptic equations and systems in our recent paper \cite{LWZ}. Our
strategy is based on a contradiction argument. We suppose that the
solution $u$ in (\ref{hal}) is unbounded. By the Doubling Lemma and
blow-up method, the equation (\ref{hal}) will become an equation in
a whole Euclidean space or a half space. We will then arrive at a
contradiction under a certain range of $p$, which means that the
solution $u$ has to be bounded. Applying Lemma \ref{tha} again, we
obtain the Liouville-type results. In this paper, we first obtain
the following result.

\begin{theorem}
Let $n\geq 3$. For $1<p\leq p^+$ if $\tilde{n}>2$ (or $1<p< \infty$
if $\tilde{n}\leq 2$ ), then the only nonnegative solution for
(\ref{hal}) is $u\equiv 0$. \label{th1}
\end{theorem}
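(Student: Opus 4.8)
The plan is to argue by contradiction using the Doubling Lemma of \cite{PQS} together with a blow-up (rescaling) argument, thereby reducing Theorem \ref{th1} to the already-available Lemma \ref{tha} (which requires boundedness). Suppose $u$ is a nontrivial nonnegative solution of (\ref{hal}) and, contrary to what we want, that $u$ is \emph{unbounded} on $\mathbb R^n_+$. The idea is to introduce the weight function $M(x) = u(x)^{(p-1)/2}$, so that the right scaling invariance of the equation $\mau + u^p = 0$ is respected: under the parabolic-type scaling $u_k(y) = \rho_k^{2/(p-1)} u(x_k + \rho_k y)$, the Pucci operator and the $u^p$ term transform homogeneously, so $u_k$ again solves the same equation on a rescaled domain. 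Since $u$ is unbounded, one can choose a sequence $x_k \in \mathbb R^n_+$ along which $M(x_k) \to \infty$; the Doubling Lemma then produces points $y_k$ (close to $x_k$) where $M$ is "almost locally maximal" in a ball of radius $\sim M(y_k)^{-1}$, and this is exactly what is needed to make the rescaled solutions $u_k$ well-behaved: $u_k(0) = 1$ and $u_k \le C$ on balls of radius comparable to $k$.

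Next I would pass to the limit. The key dichotomy is how far the blow-up points $y_k$ sit from the boundary $\partial\mathbb R^n_+$ relative to the rescaling radius $\rho_k = M(y_k)^{-1}$. If $\rho_k^{-1} \operatorname{dist}(y_k, \partial\mathbb R^n_+) \to \infty$, the rescaled domains exhaust all of $\mathbb R^n$, and interior elliptic estimates for Pucci's operators (the $C^{1,\alpha}$ / Harnack theory from \cite{CC}) give local uniform convergence of a subsequence of $u_k$ to a nonnegative viscosity solution $U$ of $\mathcal{M}^+_{\lambda,\Lambda}(D^2 U) + U^p = 0$ on all of $\mathbb R^n$ with $U(0)=1$; this contradicts Lemma \ref{eur} in the range $1<p\le p^+$ (or $1<p<\infty$ if $\tilde n \le 2$). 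If instead $\rho_k^{-1}\operatorname{dist}(y_k,\partial\mathbb R^n_+)$ stays bounded, then after translating so the nearest boundary point is at the origin, the rescaled domains converge to a half space $\mathbb R^n_+$ (or a half space with the boundary pushed to a finite location, which is affinely the same), boundary regularity for Pucci operators gives convergence to a nonnegative viscosity solution $U$ of (\ref{hal}) which is now \emph{bounded} (by $C$) and nontrivial ($U(0)=1$ if the maximizing point stays in the interior, or one uses $u_k \to U$ with $\sup U = 1$ from the doubling normalization) — and this contradicts Lemma \ref{tha} in the stated range of $p$. Either way we reach a contradiction, so $u$ must be bounded; a final application of Lemma \ref{tha} forces $u \equiv 0$.

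The main obstacle I expect is making the blow-up limit a genuine \emph{viscosity} solution on the limiting domain with the correct boundary behavior, and ensuring the limit is nontrivial. Three points need care: (i) the Doubling Lemma must be applied to the weight $M = u^{(p-1)/2}$ on the domain $\mathbb R^n_+$ with the correct choice of the auxiliary "distance to where $M$ blows up" function, so that the normalization $u_k(0)=1$ and the local bound survive in the limit; (ii) in the boundary case one must verify that the zero Dirichlet datum passes to the limit — this follows from uniform boundary $C^\alpha$ estimates for $\mathcal M^+$, but one should check these apply to viscosity solutions with merely continuous data; and (iii) one must rule out the degenerate possibility that the limit profile is identically zero or that the blow-up radii force the domain to collapse — here the local uniform positivity at the origin coming from the doubling construction, combined with the Harnack inequality, is what saves the argument. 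Once these technical pieces are in place, the contradiction with Lemma \ref{eur} (interior case) or Lemma \ref{tha} (boundary case) is immediate, and the theorem follows.
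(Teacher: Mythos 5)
Your proposal follows the same strategy as the paper's own proof: introduce the weight $M=u^{(p-1)/2}$, apply the Doubling Lemma (Lemma \ref{dob}) to produce a normalized blow-up sequence, and split into two cases according to whether $d_k=x_{k,n}M(x_k)$ tends to infinity (whole-space limit, ruled out by Lemma \ref{eur}) or stays bounded (half-space limit with zero Dirichlet data, ruled out by Lemma \ref{tha}), concluding that $u$ must be bounded and then invoking Lemma \ref{tha} once more. The technical points you flag are handled in the paper exactly as you anticipate, via the interior and boundary $C^{\alpha}$ estimates of Lemma \ref{reg}, the stability Lemma \ref{cov}, and the normalization $v_k(0)=1$ preserved under local uniform convergence.
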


Quaas and Sirakov in \cite{QS1} consider the non-existence results
for  the elliptic system with Pucci extremal operators  in the Euclidean
space and a half space, which are essential in getting a priori bound
and existence by fixed point theorem for fully nonlinear elliptic
system. Motivated by the work \cite{CL}, they characterized the
range of powers $p, q$ for the nonexistence of positive supersolutions of  (\ref{sysg}) in the Euclidean space.

\begin{lemma}
Let $\lambda_1, \lambda_2, \Lambda_1,\Lambda_2>0$. Set
$$\mathcal{M}^+_l(D^2u_l)=\mathcal{M}^+_{\lambda_l,\Lambda_l}(D^2u_l)$$
for $l=1,2$. Define $$\rho_l=\frac{\lambda_l}{\Lambda_l}, \quad
N_l=\rho_l(n-1)+1.$$  Let $N_1, N_2>2$ and $pq>1$ with $p, q\geq 1$.
Then there are no positive supersolutions for
\begin{equation}
\left \{ \begin{array}{ll}
\mathcal{M}^+_{1}(D^2u_1)+u_2^p=0   \quad \quad  &\mbox{in}\ \mathbb R^n, \\
\mathcal{M}^+_{2}(D^2u_2)+u_1^q=0   \quad \quad  &\mbox{in}\ \mathbb
R^n, \\
\end{array}
\right. \label{sysg}
 \end{equation}
if
$$\frac{2(p+1)}{pq-1}\geq N_1-2, \ \mbox{or} \ \
\frac{2(q+1)}{pq-1}\geq N_2-2.$$ \label{pos}
\end{lemma}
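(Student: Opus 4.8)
The plan is to argue by contradiction, using an iteration of explicit radial lower barriers for the spherical infima, in the spirit of Cutri--Leoni \cite{CL} for a single equation. Suppose $(u_1,u_2)$ with $u_1,u_2>0$ is a viscosity supersolution of (\ref{sysg}); assume without loss that the first inequality $\frac{2(p+1)}{pq-1}\geq N_1-2$ holds. Write $p_1=p$, $p_2=q$, $\mathcal M^+_l:=\mathcal M^+_{\lambda_l,\Lambda_l}$, and $m_l(r):=\min_{|x|=r}u_l(x)$ for $l=1,2$. Since $\mathcal M^+_l(D^2u_l)\leq-u_{3-l}^{p_l}\leq0$, each $u_l$ is $\mathcal M^+_l$-superharmonic, so by the minimum principle $m_l(r)=\min_{\bar B_r}u_l$; hence $m_l$ is positive, non-increasing, and bounded near $0$.

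First I would establish a comparison inequality. Fix $l$ and $R>0$ and let $v_R$ be the bounded radial solution of $\Lambda_l\big(v''+\tfrac{N_l-1}{t}v'\big)=-m_{3-l}(t)^{p_l}$ on $(0,R)$ with $v_R(R)=m_l(R)$. Since the right-hand side is $\leq0$, $v_R$ is non-increasing, and a short computation — in which the identity $N_l-1=\tfrac{\lambda_l}{\Lambda_l}(n-1)$ forces the decisive cancellation $\tfrac{\lambda_l(n-1)}{\Lambda_l(N_l-1)}=1$, which is precisely why $N_l$ is the correct effective ``dimension'' — shows that $x\mapsto v_R(|x|)$ satisfies $\mathcal M^+_l\big(D^2 v_R(|x|)\big)\geq-m_{3-l}(|x|)^{p_l}\geq-u_{3-l}(x)^{p_l}$ in $B_R$; it is therefore a viscosity subsolution of the equation $u_l$ super-solves, lying below $u_l$ on $\partial B_R$. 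By the comparison principle $v_R\leq u_l$ in $B_R$, so $m_l(r)\geq v_R(r)$; integrating the ODE and letting $R\to\infty$,
\[ m_l(r)\ \geq\ \frac{1}{\Lambda_l}\int_r^\infty t^{1-N_l}\Big(\int_0^t s^{N_l-1}m_{3-l}(s)^{p_l}\,ds\Big)dt ,\qquad l=1,2 . \]
The same comparison with multiples of the $\mathcal M^+_l$-harmonic function $|x|^{2-N_l}$ (legitimate since $N_l>2$) in exterior annuli gives also $m_l(r)\geq c_l\,r^{2-N_l}$ for $r\geq1$, with $c_l=\min_{|x|=1}u_l>0$.

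Next I would iterate these inequalities. Inserting $m_2(r)\geq c_2 r^{2-N_2}$ into the displayed bound for $m_1$ produces, provided $2<(N_2-2)p<N_1$, a new bound $m_1(r)\geq c_1'\,r^{2-(N_2-2)p}$ whose exponent is strictly larger (because $(N_2-2)p<N_1$), and symmetrically for $m_2$; iterating, the decay exponents $b_k^{(l)}$ of $m_l$ obey $b_{k+1}^{(1)}=p\,b_k^{(2)}-2$ and $b_{k+1}^{(2)}=q\,b_k^{(1)}-2$, starting from $b_0^{(l)}=N_l-2$. The composed recursion $b\mapsto pq\,b-2(p+1)$ has the repelling fixed point $\tfrac{2(p+1)}{pq-1}$, and the hypothesis places $b_0^{(1)}=N_1-2$ at or below it, so (away from equality) the exponents decrease monotonically to $-\infty$; consequently at some step the relevant multiple $p\,b_k^{(2)}$ or $q\,b_k^{(1)}$ of an exponent drops to $\leq2$, the outer integral $\int_r^\infty t^{1-(\cdot)}\,dt$ diverges, and the lower bound forces $m_l\equiv+\infty$ — contradicting $m_l(0^+)=u_l(0)<\infty$. (If the iteration cannot be initiated in either direction, i.e.\ $(N_2-2)p\geq N_1$ and $(N_1-2)q\geq N_2$ simultaneously, then the hypothesis forces both of these to be equalities, which is the borderline case below.) The case in which the second hypothesis $\tfrac{2(q+1)}{pq-1}\geq N_2-2$ holds is symmetric.

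The main obstacle is the borderline, the equality $\tfrac{2(p+1)}{pq-1}=N_1-2$ (and the degenerate configurations just mentioned), where the iteration stalls exactly at the fixed point: the polynomial barriers now improve only by a logarithmic factor, which then compounds on iteration, and one must combine this with a global integral bound for $\int_{\mathbb R^n}u_{3-l}^{p_l}$ — obtained by testing the equation against a suitable cutoff — to reach a contradiction. The remaining ingredients — the minimum and comparison principles for $\mathcal M^+_l$ with continuous data, and the sign bookkeeping that singles out $N_l$ in the construction of $v_R$ — are standard (see \cite{CC,CL}).
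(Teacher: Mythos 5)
First, a framing note: the paper does not prove Lemma \ref{pos}; it quotes it as a known result of Quaas and Sirakov \cite{QS1}, so there is no ``paper's own proof'' to compare against here. What you have written is a reconstruction of the Quaas--Sirakov argument, and the skeleton is the right one: define the spherical minima $m_l(r)$, use the radial ODE comparison (with the observation that $|x|^{2-N_l}$ is exactly $\mathcal M^+_l$-harmonic because $\Lambda_l(1-N_l)+\lambda_l(n-1)=0$), derive the nested-integral lower bound for $m_l$, seed the iteration with $m_l(r)\geq c_l r^{2-N_l}$, and drive the decay exponents past the repelling fixed point $\frac{2(p+1)}{pq-1}$. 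The verification that the radial barrier is a viscosity subsolution — splitting into $v''\geq 0$ and $v''<0$, noting that $\lambda_l v''\geq\Lambda_l v''$ when $v''<0$ — is correct, and the Fubini-type bookkeeping for when the outer integral diverges is also correct. Your observation that ``cannot initiate the iteration in either direction'' forces both inequalities in (\ref{exp}) to be equalities is a nice algebraic check and is right. For the strict-inequality case this is a complete and correct proof outline.

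The genuine gap is the borderline case $\frac{2(p+1)}{pq-1}=N_1-2$, which is part of the statement (the hypothesis is $\geq$, not $>$). You flag it, but the proposed fix — testing the equation against a cutoff to obtain a global bound on $\int_{\mathbb R^n}u_{3-l}^{p_l}$ — does not close the argument. In fact an integral bound comes for free from your own display: letting $r\to 0^+$ and using Fubini gives $\int_0^\infty s\,m_2(s)^p\,ds<\infty$, yet inserting $m_2(s)\geq cs^{2-N_2}$ with $2-N_2=-\frac{2(q+1)}{pq-1}$ one finds the exponent in the integrand is $1-p\cdot\frac{2(q+1)}{pq-1}=1-N_1<-1$, so the integral converges and yields no contradiction. (Separately, ``testing against a cutoff'' is a divergence-form device and does not transfer cleanly to Pucci operators.) What is actually needed at criticality is the iterated logarithmic refinement: at the fixed point, inserting $m_{3-l}(s)\gtrsim s^{2-N_{3-l}}\log^\alpha s$ into the nested integral produces $m_l(r)\gtrsim r^{2-N_l}\log^{p_l\alpha+1}r$, so the logarithmic exponents obey $\alpha\mapsto pq\,\alpha+q+1$ and grow geometrically; this must be played off against the Hadamard monotonicity $r\mapsto m_l(r)\,r^{N_l-2}$ nondecreasing (the three-circle estimate, Lemma \ref{kle} specialised to the whole-space operator) while tracking the multiplicative constants carefully. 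Until this is carried out, the equality case of the lemma is not established.
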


By the moving plane method and Lemma \ref{pos} in the Euclidean space, the
following Liouville-type theorem in a half space is also established
under the boundedness assumption in \cite{QS1}.
\begin{lemma}
Let $N_1, N_2>2$ and $pq>1$ with $p, q\geq 1$. There exist no
positive bounded solutions for the elliptic equation system
\begin{equation}
\left \{ \begin{array}{ll}
\mathcal{M}^+_{1}(D^2u_1)+u_2^p=0   \quad \quad  &\mbox{in}\ \mathbb R^n_+, \\
\mathcal{M}^+_{2}(D^2u_2)+u_1^q=0   \quad \quad  &\mbox{in}\ \mathbb
R^n_+, \\
u_1=u_2=0    \quad  &\mbox{on} \ \partial \mathbb
 R^n_+,
\end{array}
\right. \label{sysf}
\end{equation}
provided
\begin{equation}
\frac{2(p+1)}{pq-1}\geq N_1-2, \ \mbox{or} \ \
\frac{2(q+1)}{pq-1}\geq N_2-2. \label{exp}
\end{equation}
\label{lem2}
\end{lemma}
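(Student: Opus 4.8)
The plan is to follow the strategy of Dancer \cite{D} and Berestycki--Caffarelli--Nirenberg, as implemented in \cite{QS1}: show that a positive bounded solution of $(\ref{sysf})$ is nondecreasing in $x_n$, pass to the limit $x_n\to+\infty$ to obtain a positive solution of the same system one dimension lower, and then derive a contradiction by invoking the whole-space nonexistence result (Lemma \ref{pos}) in dimension $n-1$. Since Pucci's operators are non-divergence and carry no variational structure, all of this is done with the Alexandrov--Bakelman--Pucci (ABP) estimate and the Caffarelli--Cabr\'e interior regularity theory \cite{CC} in place of energy methods.

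\emph{Step 1 (monotonicity in $x_n$).} Suppose $(u_1,u_2)$ is a positive bounded solution of $(\ref{sysf})$. The system is cooperative, since the coupling terms $u_2^p$ and $u_1^q$ are nondecreasing in the other unknown. For $\mu>0$ put $x^\mu=(x',2\mu-x_n)$ and, on the strip $\Sigma_\mu=\{0<x_n<\mu\}$, set $w_i^\mu(x)=u_i(x^\mu)-u_i(x)$. Using the invariance of $\mathcal{M}^+_i$ ($i=1,2$) under the reflection $x\mapsto x^\mu$, the mean value theorem applied to $s\mapsto s^p$ and $s\mapsto s^q$, and the inequality $\mathcal{M}^+_i(A)-\mathcal{M}^+_i(B)\geq\mathcal{M}^-_{\lambda_i,\Lambda_i}(A-B)$, one checks that $(w_1^\mu,w_2^\mu)$ satisfies a cooperative system of viscosity inequalities
\begin{equation*}
\mathcal{M}^-_{\lambda_1,\Lambda_1}(D^2 w_1^\mu)+c_1(x)\,w_2^\mu\leq 0,\qquad \mathcal{M}^-_{\lambda_2,\Lambda_2}(D^2 w_2^\mu)+c_2(x)\,w_1^\mu\leq 0\quad\text{in }\Sigma_\mu,
\end{equation*}
with $0\leq c_i\in L^\infty$ (here the boundedness of $u_1,u_2$ is essential), while $w_i^\mu\geq 0$ on $\partial\Sigma_\mu$. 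For $\mu$ small the strip is narrow, so the maximum principle for cooperative fully nonlinear systems on domains of small measure yields $w_i^\mu\geq 0$ in $\Sigma_\mu$; sliding $\mu$ upward (the strong maximum principle for cooperative systems allows one to cross any critical level) and using the Berestycki--Caffarelli--Nirenberg device, which exploits boundedness of $(u_1,u_2)$ to control the behavior as $|x'|\to\infty$, one obtains $w_i^\mu\geq 0$ for every $\mu>0$. Choosing $\mu=(a+b)/2$ for $0<a<b$ gives $u_i(x',b)\geq u_i(x',a)$, i.e. each $u_i$ is nondecreasing in $x_n$.

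\emph{Step 2 (dimension reduction and conclusion).} Since $u_i$ is bounded and nondecreasing in $x_n$, the limit $\bar u_i(x')=\lim_{x_n\to+\infty}u_i(x',x_n)$ exists and $\bar u_i(x')\geq u_i(x',1)>0$ on $\mathbb{R}^{n-1}$. By the interior $C^{1,\alpha}$ and $W^{2,p}$ estimates for Pucci's operators, the family $\{u_i(\cdot,\cdot+k)\}_{k\geq 1}$ is locally equicontinuous, hence the convergence is locally uniform, and by stability of viscosity solutions $(\bar u_1,\bar u_2)$ is a positive viscosity solution of $(\ref{sysg})$ in $\mathbb{R}^{n-1}$; here one uses that a function independent of $x_n$ contributes a zero eigenvalue in the $x_n$ direction, so that $\mathcal{M}^+_{\lambda,\Lambda}(D^2_{x',x_n}\bar u_i)=\mathcal{M}^+_{\lambda,\Lambda}(D^2_{x'}\bar u_i)$. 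Now apply Lemma \ref{pos} in dimension $n-1$: there the relevant constants are $N_l'=\rho_l(n-2)+1=N_l-\rho_l$, which are strictly smaller than $N_l$ since $\rho_l>0$, so the hypothesis $(\ref{exp})$ forces $\frac{2(p+1)}{pq-1}\geq N_1'-2$ or $\frac{2(q+1)}{pq-1}\geq N_2'-2$. Hence $(\ref{sysg})$ has no positive supersolution in $\mathbb{R}^{n-1}$, contradicting the existence of $(\bar u_1,\bar u_2)$. (If $N_l'\leq 2$ for some $l$, one invokes instead the version of the whole-space nonexistence valid for all $p,q>1$, in the spirit of \cite{CL,QS1}.) Therefore $(\ref{sysf})$ has no positive bounded solution.

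I expect Step 1 to be the main obstacle: one needs a maximum principle for cooperative fully nonlinear systems with Pucci operators valid in an \emph{unbounded} narrow strip, together with the Berestycki--Caffarelli--Nirenberg argument ruling out a loss of positivity of $w_i^\mu$ as $|x'|\to\infty$; because of the non-divergence structure this has to rest entirely on the ABP estimate and its small-measure consequence rather than on variational tools. A second, more technical point genuine to the fully nonlinear case is the limit in Step 2, which needs the Caffarelli--Cabr\'e regularity theory to upgrade monotone pointwise convergence to locally uniform convergence and thereby preserve the viscosity equation in the limit.
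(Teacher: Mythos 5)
The paper does not prove Lemma \ref{lem2}; it cites \cite{QS1} and notes only that it follows by the moving plane method together with Lemma \ref{pos} in the Euclidean space, which is precisely what your two-step proposal reconstructs. Your technical ingredients are the correct ones for the fully nonlinear setting --- the inequality $\mathcal{M}^+_i(A)-\mathcal{M}^+_i(B)\geq\mathcal{M}^-_{\lambda_i,\Lambda_i}(A-B)$ to pass to a cooperative linear system for the reflected differences, the ABP/narrow-slab maximum principle and Berestycki--Caffarelli--Nirenberg device (the genuine difficulty, which you flag correctly), and interior $C^{1,\alpha}$ estimates plus stability of viscosity solutions to justify the monotone limit $x_n\to\infty$ --- and your observation that the effective constants after dimension reduction are $N_l'=\rho_l(n-2)+1<N_l$, so that hypothesis (\ref{exp}) is more than sufficient for the whole-space result in $\mathbb{R}^{n-1}$, is accurate.
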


We are also able to get rid of the boundedness assumption in the above lemma by
choosing appropriate rescaling functions and employing the Doubling
Lemma argument. More precisely,  we prove the following

\begin{theorem}
There exist no positive solutions for (\ref{sysf}) if $p, q>1$ and
the assumption (\ref{exp}) is satisfied. \label{th2}
\end{theorem}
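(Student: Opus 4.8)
The plan is to argue by contradiction, following the strategy already outlined in the introduction for Theorem \ref{th1}. Suppose $(u_1,u_2)$ is a positive solution of (\ref{sysf}) with $p,q>1$ and (\ref{exp}) holding, and suppose that at least one of $u_1,u_2$ is unbounded on $\mathbb R^n_+$. The goal is to run a doubling-rescaling argument that produces, in the limit, either a positive supersolution of the system (\ref{sysg}) on all of $\mathbb R^n$ or a positive solution of the half-space system (\ref{sysf}) that is now \emph{bounded}, so that Lemma \ref{pos} or Lemma \ref{lem2} applies and yields $u_1\equiv u_2\equiv 0$, a contradiction. Once unboundedness is ruled out, one invokes Lemma \ref{lem2} directly on the (now bounded) solution $(u_1,u_2)$ to conclude.

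The technical heart is choosing the right rescaling exponents. Because the two equations are coupled with different powers, the natural scaling is dictated by the pair of exponents $\alpha=\frac{2(p+1)}{pq-1}$ and $\beta=\frac{2(q+1)}{pq-1}$, which satisfy $\alpha p = \beta + 2$ and $\beta q = \alpha + 2$ (these are exactly the homogeneities that make $\mathcal{M}^+_1(D^2 u_1)+u_2^p$ and $\mathcal{M}^+_2(D^2 u_2)+u_1^q$ scale consistently). One sets $M(x) = u_1(x)^{1/\alpha} + u_2(x)^{1/\beta}$ (or a suitable variant), and applies the Doubling Lemma of \cite{PQS} to $M$: if $M$ is unbounded there is a sequence $x_k\in\mathbb R^n_+$ with $M(x_k)\to\infty$ and $M(y)\le 2M(x_k)$ for all $y$ in a ball of radius $k/M(x_k)$ around $x_k$. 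Define $\lambda_k = M(x_k)^{-1}$ and the rescaled functions
\begin{equation}
v_1^k(y) = \lambda_k^{\alpha}\, u_1(x_k+\lambda_k y), \qquad v_2^k(y) = \lambda_k^{\beta}\, u_2(x_k+\lambda_k y).
\end{equation}
By the scaling relations above, each $(v_1^k,v_2^k)$ solves the same system (\ref{sysf}) on the rescaled half space, the pair is uniformly bounded on expanding balls, and at $y=0$ we have $(v_1^k)^{1/\alpha}(0)+(v_2^k)^{1/\beta}(0)=1$, so the limit is nontrivial. The ratio $\mathrm{dist}(x_k,\partial\mathbb R^n_+)/\lambda_k$ either stays bounded or tends to $\infty$ (along a subsequence): in the first case the limiting domain is a half space and, after translating, one gets a bounded positive solution of (\ref{sysf}), contradicting Lemma \ref{lem2}; in the second case the limiting domain is all of $\mathbb R^n$ and one gets a bounded — hence in particular a — positive supersolution of (\ref{sysg}), contradicting Lemma \ref{pos}. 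Stability of viscosity (super)solutions under uniform limits, together with interior $C^\alpha$ / Krylov–Safonov estimates for Pucci operators from \cite{CC}, gives the required compactness to pass to the limit.

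The main obstacle I expect is the simultaneous handling of the coupled system under doubling: a single scalar doubling function must control \emph{both} components at the same scale, and one must verify that the rescaled system is genuinely invariant (the Pucci operator $\mathcal{M}^+_{\lambda_l,\Lambda_l}$ is positively homogeneous of degree one in $D^2$, so $\mathcal{M}^+_l(D^2 v_l^k)(y) = \lambda_k^{\alpha+2}\,\mathcal{M}^+_l(D^2 u_l)(x_k+\lambda_k y)$ for $l=1$ and analogously with $\beta$ for $l=2$, and the exponent bookkeeping $\alpha+2=\beta p$, $\beta+2 = \alpha q$ must match the source terms exactly). A secondary subtlety is ensuring positivity is not lost in the limit — this is where the normalization at $y=0$ plus the strong maximum principle for Pucci operators is used to guarantee the limiting pair is strictly positive (or at least nontrivial, which together with the maximum principle forces strict positivity where needed). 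Boundary regularity up to $\partial\mathbb R^n_+$ for the half-space case also requires care, but this is handled by the standard barrier/boundary Hölder estimates for uniformly elliptic equations, as in \cite{QS1}.
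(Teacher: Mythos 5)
Your proposal follows essentially the same route as the paper: apply the Doubling Lemma to the scalar majorant $M=u_1^{1/\alpha}+u_2^{1/\beta}$ with $\alpha=\frac{2(p+1)}{pq-1}$, $\beta=\frac{2(q+1)}{pq-1}$, rescale anisotropically in the two components, pass to the limit via interior Schauder/$C^\alpha$ estimates and stability of viscosity solutions, and split according to whether $x_{k,n}M(x_k)$ blows up (whole-space limit, contradiction with Lemma \ref{pos}) or stays bounded (translate to a half space, contradiction with Lemma \ref{lem2}); this is exactly the paper's argument. One small slip to fix: when you first state the scaling identities you write $\alpha p=\beta+2$ and $\beta q=\alpha+2$, which are \emph{not} satisfied by your $\alpha,\beta$ (they would force $p=q$); the correct relations, which you do state later in the ``exponent bookkeeping'' sentence, are $\alpha+2=p\beta$ and $\beta+2=q\alpha$.
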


With the Liouville-type theorem for the Euclidean space in hand and the
Doubling Lemma, we can further investigate the singularity and decay
estimates for positive solutions of fully nonlinear elliptic
equations in a  bounded domain or an  exterior domain. Let $1<p\leq
p^+$ if $\tilde{n}>2$ or $1<p<\infty$ if $\tilde{n}\leq 2$. Recall
that $\tilde{n}=\frac{\lambda}{\Lambda}(n-1)+1$. We consider
\begin{equation}
\mathcal{M}^+_{\lambda,\Lambda}(D^2u)+u^p=0   \quad \quad \mbox{in}\
\Omega.
 \label{ome}
\end{equation}
We will establish the following
\begin{theorem}
Let $\Omega\not=\mathbb R^n$ be a domain in $\mathbb R^n$. There
exists $C=C(n,p)>0$ such that any nonnegative solution of (\ref{ome})
satisfies
\begin{equation} u+|\nabla u|^{\frac{2}{p+1}}\leq C
dist^{\frac{-2}{p-1}}(x,\,\partial\Omega), \quad \ \forall \
x\in\Omega.\label{sin}
\end{equation}

In particular, if $\Omega$ is an exterior domain, i.e. the set
$\{x\in\mathbb R^n| |x|>R \}$ for some $R>0$, then
$$    u+|\nabla u|^{\frac{2}{p+1}}\leq C
|x|^{\frac{-2}{p-1}}, \quad \ \forall \ |x|\geq 2R.
$$
\label{thh}
\end{theorem}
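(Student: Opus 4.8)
The plan is to run the standard Doubling-Lemma rescaling argument of Poláček–Quittner–Souplet (the "method B"), using the Liouville theorem for $\mathbb R^n$ (Lemma \ref{eur}) together with the Liouville theorem for the half space (Theorem \ref{th1}) as the two possible contradictions that can arise after a blow-up. Introduce $M(x) = u(x)^{(p-1)/2} + |\nabla u(x)|^{(p-1)/(p+1)}$, which is a continuous nonnegative function on $\Omega$, and suppose the estimate \eqref{sin} fails. Then there is a sequence of domains $\Omega_k$ (all of which may be taken to be $\Omega$ itself) and points $y_k \in \Omega$ with
\[
M(y_k)\, \mathrm{dist}(y_k,\partial\Omega) \longrightarrow \infty .
\]
Apply the Doubling Lemma to $M$ on $\Omega$: it produces new points $x_k \in \Omega$ with $M(x_k) \ge M(y_k)$, with $M(x_k)\,\mathrm{dist}(x_k,\partial\Omega)\to\infty$, and with the crucial local control
\[
M(z) \le 2 M(x_k) \qquad \text{whenever } |z - x_k| \le k\, M(x_k)^{-1}.
\]

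Next, rescale. Set $\mu_k = M(x_k)^{-1} \to 0$ and define
\[
v_k(\xi) = \mu_k^{\,2/(p-1)}\, u\big(x_k + \mu_k \xi\big).
\]
Because $\mathcal M^+_{\lambda,\Lambda}$ is positively homogeneous of degree one in $D^2 u$ and the nonlinearity $u^p$ scales with exactly the weight $\mu_k^{2/(p-1)}$ built into $v_k$, each $v_k$ solves $\mathcal M^+_{\lambda,\Lambda}(D^2 v_k) + v_k^p = 0$ on the rescaled domain $\Omega_k' = \mu_k^{-1}(\Omega - x_k)$. The doubling inequality translates into the uniform bound $v_k(\xi)^{(p-1)/2} + |\nabla v_k(\xi)|^{(p-1)/(p+1)} \le 2$ on the balls $|\xi| \le k$, and by construction $v_k(0)^{(p-1)/2} + |\nabla v_k(0)|^{(p-1)/(p+1)} = 1$, so the limit is nontrivial. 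One now distinguishes two cases according to the behavior of $d_k := \mu_k^{-1}\mathrm{dist}(x_k,\partial\Omega)$: if $d_k \to \infty$, the domains $\Omega_k'$ exhaust $\mathbb R^n$; if $d_k$ stays bounded (after translating so the nearest boundary point is at fixed distance and rotating so the boundary tangent plane is $\{\xi_n = 0\}$), the domains converge to a half space $\mathbb R^n_+$, with the Dirichlet condition $v_k = 0$ on the flat part of $\partial\Omega_k'$ passing to the limit. Interior (and, in the half-space case, boundary) regularity estimates for $\mathcal M^+_{\lambda,\Lambda}$ from \cite{CC}, combined with the uniform $C^1$ bound from the doubling inequality, give $C^{2,\alpha}_{loc}$ (or $C^{1,\alpha}_{loc}$) compactness, so along a subsequence $v_k \to v$ locally uniformly, where $v \ge 0$ is a nontrivial viscosity solution of $\mathcal M^+_{\lambda,\Lambda}(D^2 v) + v^p = 0$ either on all of $\mathbb R^n$ or on $\mathbb R^n_+$ with $v = 0$ on $\partial\mathbb R^n_+$. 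Since $1 < p \le p^+$ (or $1<p<\infty$ when $\tilde n \le 2$), Lemma \ref{eur} rules out the first case and Theorem \ref{th1} rules out the second — contradiction. Hence \eqref{sin} holds.

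Finally, the estimate \eqref{sin} is exactly the inequality $M(x) \le C\,\mathrm{dist}(x,\partial\Omega)^{-1}$ after raising to the appropriate powers, i.e. $u(x) \le C\,\mathrm{dist}(x,\partial\Omega)^{-2/(p-1)}$ and $|\nabla u(x)| \le C\,\mathrm{dist}(x,\partial\Omega)^{-2/(p+1)}$. For the exterior-domain consequence, note that for $|x| \ge 2R$ one has $\mathrm{dist}(x,\partial\Omega) \ge |x|/2$, so \eqref{sin} immediately yields the stated decay in $|x|$.

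The main obstacle I expect is the compactness and passage to the limit, specifically two points: (i) obtaining \emph{uniform} regularity estimates for the rescaled solutions $v_k$ up to the flat boundary in the half-space case — this requires the boundary $C^{1,\alpha}$/$C^{2,\alpha}$ theory for Pucci operators with zero Dirichlet data and the fact that the (rescaled) curved boundaries $\partial\Omega_k'$ flatten in a controlled $C^1$ (indeed $C^{1,1}$, if $\partial\Omega$ is smooth enough) fashion, so that the limiting domain is genuinely a half space and the limit inherits the boundary condition; and (ii) checking that a nontrivial \emph{viscosity} solution survives the limit — i.e. that the normalization $M(0) = 1$ is not lost, which is where the precise form of the doubling inequality ($M \le 2M(x_k)$ on balls of radius $k\mu_k$, radius $\to\infty$ after rescaling) is essential, and that the gradient term in $M$ is handled by including it in the rescaling bound rather than only controlling $\|u\|_\infty$. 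Once these are in place, the dichotomy-plus-Liouville step is routine given Lemma \ref{eur} and Theorem \ref{th1}.
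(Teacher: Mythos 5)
Your core mechanism (Doubling Lemma, rescaling by $M(x_k)^{-1}$, normalization at the origin, compactness, Liouville) is the same as the paper's, and the final translations from $M(x)\le C\,\mathrm{dist}(x,\partial\Omega)^{-1}$ to the stated estimates are correct. But the half-space dichotomy you introduce is both unnecessary and, in fact, unusable here, and your listed ``main obstacle'' (boundary regularity for the rescaled solutions near a flattening boundary) does not actually occur.

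The point you are missing is what the Doubling Lemma buys you in this setting. Negating \eqref{sin} with a universal constant produces a sequence $(u_k,\Omega_k,y_k)$ with $M_k(y_k)\,\mathrm{dist}(y_k,\partial\Omega_k)>2k$, and Lemma~\ref{dob} (applied with $D=\Omega_k$, $\Sigma=\overline{\Omega_k}$, $\Gamma=\partial\Omega_k$) returns points $x_k$ satisfying not only the doubling inequality but also $M_k(x_k)\,\mathrm{dist}(x_k,\partial\Omega_k)>2k$. With $\mu_k=M_k(x_k)^{-1}$, this gives $d_k=\mu_k^{-1}\mathrm{dist}(x_k,\partial\Omega_k)>2k\to\infty$ automatically, so the ball $\bar B(x_k,k\mu_k)$ on which the doubling bound holds lies entirely inside $\Omega_k$, and the rescaled domains exhaust all of $\mathbb R^n$. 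Your ``Case~2'' ($d_k$ bounded) is impossible. This is in contrast to the proofs of Theorems~\ref{th1} and~\ref{th2}, where there is no distance factor in the defect being contradicted and hence $x_{k,n}M(x_k)$ genuinely can stay bounded; there the dichotomy is real, here it is not. Consequently only the whole-space Liouville result (Lemma~\ref{eur}) is needed, and the compactness is purely interior $W^{2,p}$/$C^{2,\alpha}$ estimates on $B_k(0)$ from Lemma~\ref{reg} — none of the boundary-flattening machinery you flag as a concern.

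There is also a more serious reason ``Case~2'' could not be salvaged even if it did arise: equation \eqref{ome} carries no Dirichlet boundary condition, and Theorem~\ref{thh} makes no smoothness assumption on $\partial\Omega$. A hypothetical half-space blow-up limit would not satisfy $v=0$ on $\partial\mathbb R^n_+$, so Theorem~\ref{th1} would not apply, and you would be stuck. That the Doubling Lemma precludes this case is therefore essential, not incidental. One more small remark: when you write that the domains $\Omega_k$ ``may all be taken to be $\Omega$ itself,'' be aware that for the constant $C$ to depend only on $(n,p)$ you must a priori allow both $u_k$ and $\Omega_k$ to vary along the contradiction sequence; fixing them from the start would only yield a $C$ depending on $u$ and $\Omega$.
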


If there exists a solution for a general continuous function $f(u)$, i.e. $u$
is a nonnegative solution for
\begin{equation} \mathcal{M}^+_{\lambda, \Lambda}(D^2
u)+f(u)= 0 \quad \ \mbox{in} \ \Omega. \label{gera}
\end{equation}
Similar singular and decay estimates also hold. Namely, if $1<p\leq
p^+$ for $\tilde{n}>2$ or $1<p<\infty$ for $\tilde{n}\leq 2$, we
have the following corollary.

\begin{corollary}
Assume that  $$\lim_{u\to \infty} u^{-p}f(u)=\gamma\in (0,\infty).$$
There exists $C(n,f)>0$ independent of $\Omega$  such that any
positive solution  in (\ref{gera}) satisfies
$$ u+|\nabla
u|^{\frac{2}{p+1}}\leq C
(1+dist^{\frac{-2}{p-1}}(x,\,\partial\Omega)), \quad \ \forall \
x\in\Omega.$$

In particular, if $\Omega=\mathbb B_R\backslash \{0\}$ for some $R$,
then
$$    u+|\nabla u|^{\frac{2}{p+1}}\leq C
(1+|x|^{\frac{-2}{p-1}}), \quad \ \forall \ 0<|x|\leq R/2.
$$
\label{cor1}
\end{corollary}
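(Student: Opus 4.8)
The plan is to mimic the doubling-and-rescaling scheme behind Theorem~\ref{thh}, the only genuinely new feature being that $f$ is merely asymptotic to the pure power $\gamma u^p$ rather than equal to it. I would argue by contradiction: if the first estimate failed, then for each $k$ there would be a domain $\Omega_k$, a positive solution $u_k$ of (\ref{gera}) in $\Omega_k$, and a point $x_k\in\Omega_k$ with
$$M_k(x_k):=u_k(x_k)^{\frac{p-1}{2}}+|\nabla u_k(x_k)|^{\frac{p-1}{p+1}}>k\bigl(1+\mathrm{dist}^{-1}(x_k,\partial\Omega_k)\bigr).$$
Since $u_k>0$, the quantity $M_k$ is continuous and strictly positive on $\Omega_k$, and the inequality forces both $M_k(x_k)\to\infty$ and $M_k(x_k)\,\mathrm{dist}(x_k,\partial\Omega_k)>k$. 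I would then feed $M_k$ into the Doubling Lemma of \cite{PQS} (stated in Section~2) with parameter $k/2$, producing points $y_k\in\Omega_k$ with $M_k(y_k)\geq M_k(x_k)$, $M_k(y_k)\,\mathrm{dist}(y_k,\partial\Omega_k)>k$, and $M_k(z)\leq 2M_k(y_k)$ for all $z$ with $|z-y_k|\leq\tfrac{k}{2}M_k(y_k)^{-1}$.

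Next I would set $\lambda_k:=M_k(y_k)^{-1}\to0$ and rescale
$$v_k(\xi):=\lambda_k^{\frac{2}{p-1}}u_k(y_k+\lambda_k\xi),\qquad |\xi|\leq k/2,$$
which is admissible because $\mathrm{dist}(y_k,\partial\Omega_k)>k\lambda_k$; by the $1$-homogeneity of $\mathcal{M}^+_{\lambda,\Lambda}$, each $v_k$ solves, in the viscosity sense,
$$\mathcal{M}^+_{\lambda,\Lambda}(D^2v_k)+\lambda_k^{\frac{2p}{p-1}}f\bigl(\lambda_k^{-\frac{2}{p-1}}v_k\bigr)=0\quad\text{in } B_{k/2},$$
with the normalization $v_k(0)^{\frac{p-1}{2}}+|\nabla v_k(0)|^{\frac{p-1}{p+1}}=1$ and the uniform bound $v_k^{\frac{p-1}{2}}+|\nabla v_k|^{\frac{p-1}{p+1}}\leq2$ on $B_{k/2}$. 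The hypothesis $u^{-p}f(u)\to\gamma\in(0,\infty)$ yields a polynomial bound $|f(s)|\leq C(1+s^p)$ for $s\geq0$, so the zero-order term $g_k:=\lambda_k^{\frac{2p}{p-1}}f(\lambda_k^{-\frac{2}{p-1}}v_k)$ obeys $|g_k|\leq C(\lambda_k^{\frac{2p}{p-1}}+v_k^p)$ and is uniformly bounded on compacts. I would then invoke the interior $C^{1,\alpha}$ estimates for uniformly elliptic fully nonlinear equations (\cite{CC}) to extract a subsequence with $v_k\to v$ in $C^1_{loc}$, where $v\geq0$ and $v(0)^{\frac{p-1}{2}}+|\nabla v(0)|^{\frac{p-1}{p+1}}=1$, so $v\not\equiv0$; and since $\mathrm{dist}(y_k,\partial\Omega_k)/\lambda_k>k\to\infty$, the rescaled domains exhaust all of $\mathbb{R}^n$.

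The remaining step, and the one I expect to be the main obstacle, is to identify the limiting equation and pass to the limit in the viscosity sense. Where $v(\xi_0)>0$ one has $\lambda_k^{-2/(p-1)}v_k\to\infty$ near $\xi_0$, so $g_k=v_k^p\cdot\bigl(f(\lambda_k^{-2/(p-1)}v_k)/(\lambda_k^{-2/(p-1)}v_k)^p\bigr)\to\gamma v^p$ locally uniformly; where $v(\xi_0)=0$ the bound $|g_k|\le C(\lambda_k^{2p/(p-1)}+v_k^p)$ forces $g_k\to0=\gamma v(\xi_0)^p$ locally uniformly. Using the stability of viscosity supersolutions under locally uniform perturbation of the lower-order term, $v$ is then a nonnegative, nontrivial viscosity supersolution of $\mathcal{M}^+_{\lambda,\Lambda}(D^2v)+\gamma v^p=0$ in $\mathbb{R}^n$; setting $w:=\gamma^{1/(p-1)}v$ and using $1$-homogeneity once more, $w$ is a nonnegative nontrivial viscosity supersolution of $\mathcal{M}^+_{\lambda,\Lambda}(D^2w)+w^p=0$ in $\mathbb{R}^n$, contradicting Lemma~\ref{eur} in the stated range of $p$ ($1<p\le p^+$ if $\tilde n>2$, or $1<p<\infty$ if $\tilde n\le2$). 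This establishes the first estimate with $C=C(n,f)$. For the ``in particular'' statement, when $\Omega=\mathbb{B}_R\setminus\{0\}$ we have $\partial\Omega=\partial\mathbb{B}_R\cup\{0\}$, and for $0<|x|\leq R/2$ the nearest boundary point is the origin, so $\mathrm{dist}(x,\partial\Omega)=|x|$; inserting this into the first estimate gives the claim. I would also flag that the precise role of the ``$1+$'' on the right-hand side is exactly to guarantee that the Doubling Lemma produces a genuine blow-up $M_k(x_k)\to\infty$, which is what makes the degenerating nonlinearity converge to $\gamma v^p$.
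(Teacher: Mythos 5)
Your proposal is correct and follows exactly the approach the paper intends (the paper omits the proof, citing the analogous argument for Theorem~\ref{thh} and \cite{PQS}): doubling lemma, blow-up rescaling, interior regularity to pass to a $C^1_{loc}$ limit, and a contradiction with Lemma~\ref{eur}. The genuinely new feature of the corollary — identifying the limiting zero-order term $\gamma v^p$ from the asymptotics of $f$, including the two cases $v(\xi_0)>0$ and $v(\xi_0)=0$, followed by the normalization $w=\gamma^{1/(p-1)}v$ — is handled correctly; one might just spell out that the convergence $g_k\to\gamma v^p$ is locally uniform by splitting each compact set according to whether $\lambda_k^{-2/(p-1)}v_k$ is above or below a fixed threshold $T$, but this is a standard refinement of exactly the argument you gave.
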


\begin{remark}
The similar results also hold for $\mathcal{M}^-_{\lambda,
\Lambda}(D^2u)$ and its system in Theorem \ref{th1}, Theorem
\ref{th2} and Theorem \ref{thh}.
\end{remark}

The study of the supersolutions for
\begin{equation}
\mathcal{M}_{\lambda,\Lambda}^-(D^2u)+u^p= 0 \quad \ \mbox{in} \
\mathbb R^n_+ \label{leo}
\end{equation}
without assumed boundary condition is  more involved. Recently, Leoni
\cite{L} obtained the Liouville-type results for (\ref{leo}),  that
is, there does not exist any positive solution in (\ref{leo}) for
$-1\leq p\leq \frac{\Lambda n+\lambda}{\Lambda n-\lambda}$. By
explicit test functions, there does exist a supersolution for
$p>\frac{\Lambda(n-1)+2\lambda}{\Lambda(n-1)}$, which is considered to be
the critical exponent for Liouville-type property \cite{AS}. The
existence or non-existence of any solution for (\ref{leo}) is still
unknown for $$\frac{\Lambda n+\lambda}{\Lambda n-\lambda}<p\leq
\frac{\Lambda(n-1)+2\lambda}{\Lambda(n-1)}.$$ In \cite{L}, the
author also points out that the inequality
\begin{equation}
\mau+u^p\leq 0  \quad \quad \mbox{in} \ \mathbb R^n_+
\end{equation}
does not have any positive solution for $$-1\leq p\leq
\frac{\tilde{n}+1}{\tilde{n}-1}.$$  Adapting the idea in \cite{L},
we consider the supersolutions for a system of fully nonlinear
elliptic equations with Pucci's extremal operators in half spaces,
i.e.
\begin{equation}
\left \{ \begin{array}{ll}
\mau+v^p=0   \quad \quad  &\mbox{in}\ \mathbb R^n_+, \\
\mav+u^q=0   \quad \quad  &\mbox{in}\ \mathbb
R^n_+. \\
\end{array}
\right. \label{pus}
\end{equation}
The difficulty of Leoni' proof in \cite{L} for (\ref{leo}) is to
show the Liouville-type property holds for the limiting case $p=
\frac{\Lambda n+\lambda}{\Lambda n-\lambda}$. In order to achieve
this, some explicit subsolution is constructed under complicated
calculations. Our main effort is also devoted to building such
explicit subsolution for the operator $\mathcal{M}_{\lambda,
\Lambda}^+$ instead of $\mathcal{M}_{\lambda, \Lambda}^-$. We show
the following Liouville-type theorem:
\begin{theorem}
Assume that $\tilde{n}\geq 2$ and $p,q>0$, there does not exist any
nontrivial nonnegative supersolution in (\ref{pus}) provided

(1) $pq>1$ and $\frac{2(p+1)}{pq-1}> \tilde{n}-1$ or
$\frac{2(q+1)}{pq-1}> \tilde{n}-1,$ \\
or

(2) $\frac{2(p+1)}{pq-1} =\tilde{n}-1$ and $\frac{2(q+1)}{pq-1}=
\tilde{n}-1,$ \\
or

(3) $pq=1$. \label{th4}

\end{theorem}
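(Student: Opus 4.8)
The plan is to argue by contradiction and to carry over to the system the Hadamard three--spheres / radial ODE machinery developed for Pucci's extremal operator in the whole space by Cutri--Leoni \cite{CL} and in the half space by Leoni \cite{L}, together with the ODE analysis for cooperative systems that underlies Lemma \ref{pos} of Quaas--Sirakov \cite{QS1}. First I would suppose that $(u,v)$ is a nontrivial nonnegative supersolution of (\ref{pus}). Since $\mau\le -v^p\le 0$ and $\mav\le -u^q\le 0$, both $u$ and $v$ are $\mathcal M^+_{\lambda,\Lambda}$--superharmonic, so by the strong minimum principle each of them is either identically zero or strictly positive in $\mathbb R^n_+$, and the two equations force $u\equiv 0\Leftrightarrow v\equiv 0$. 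Hence I may assume $u,v>0$ in $\mathbb R^n_+$ and look for a contradiction.

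The first step is to reduce to a coupled system of ordinary differential inequalities. Because $\mathcal M^+_{\lambda,\Lambda}$ is rotationally invariant, comparing $u$ and $v$ on the hemispheres $\partial B_r\cap\mathbb R^n_+$ with radial barriers produces, as in \cite{CL,L}, profile functions $\phi_u(r),\phi_v(r)$ of $u$ and $v$ — weighted spherical averages whose weight is the first Dirichlet eigenfunction $x_n/|x|$ of the hemisphere, the device that compensates for the absence of a boundary condition on $\partial\mathbb R^n_+$ and raises the effective dimension from $\tilde n$ to $\tilde n+1$ — which satisfy on $(r_0,\infty)$, after the substitution that removes the resulting Hardy term, a coupled pair of differential inequalities of exactly the type analysed in \cite{CL,QS1}. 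Separately, comparison with an explicit subsolution of $\mathcal M^+_{\lambda,\Lambda}(D^2 w)\ge 0$ of homogeneity $1-\tilde n$ and behaving like $x_n|x|^{-\tilde n}$ gives the lower bound $\phi_u(r),\phi_v(r)\ge c_0\,r^{-(\tilde n-1)}$ for large $r$, i.e. a positive supersolution in the half space cannot decay faster than the threshold rate $r^{-(\tilde n-1)}$. Constructing and verifying these barriers for the genuinely nonlinear operator $\mathcal M^+_{\lambda,\Lambda}$, rather than for $\mathcal M^-_{\lambda,\Lambda}$ as in \cite{L}, is the technical heart of the argument.

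The second step is the ODE analysis. Integrating the coupled inequalities and using that the profiles are eventually nonincreasing, one obtains, for $pq>1$, the decay bounds $\phi_u(r)\le C r^{-\alpha}$ and $\phi_v(r)\le C r^{-\beta}$ with the resonant exponents $\alpha=\frac{2(p+1)}{pq-1}$, $\beta=\frac{2(q+1)}{pq-1}$ of the system (this is precisely the estimate behind Lemma \ref{pos}), while for $pq=1$ the same computation forces $\phi_u$ and $\phi_v$ to decay faster than every power. Comparing with the lower bound of Step 1, the upper and lower rates are compatible only if $\alpha\le\tilde n-1$ and $\beta\le\tilde n-1$. Therefore, if $\frac{2(p+1)}{pq-1}>\tilde n-1$ or $\frac{2(q+1)}{pq-1}>\tilde n-1$, or if $pq=1$, a contradiction follows at once, which covers cases (1) and (3). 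In the borderline case (2) one necessarily has $p=q=\frac{\tilde n+1}{\tilde n-1}$, so that $\alpha=\beta=\tilde n-1$ and the power estimates are exactly balanced; there the contradiction can only be reached after sharpening the decay estimate by comparison with a logarithmically corrected subsolution of leading order $x_n|x|^{-(\tilde n-1)}(\log|x|)^{-\gamma}$, which yields the extra logarithmic decay incompatible with $\phi_u\ge c_0 r^{-(\tilde n-1)}$. This is the exact counterpart, for $\mathcal M^+_{\lambda,\Lambda}$, of the delicate construction Leoni performs at the limiting exponent in \cite{L}.

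The hard part is thus the first step together with the critical case: producing and checking the explicit sub/supersolutions for $\mathcal M^+_{\lambda,\Lambda}$ in the half space, and above all the logarithmically refined one needed when $p=q=\frac{\tilde n+1}{\tilde n-1}$. The difficulty is the one typical of Pucci's operators — since positive and negative eigenvalues of the Hessian carry the different constants $\Lambda$ and $\lambda$, one must keep precise track, throughout the region where the barrier is used, of the signs of the eigenvalues of the Hessian of the candidate profile $x_n^{a}|x|^{-b}(\log|x|)^{-\gamma}$, and carry the resulting case distinctions through the computation to confirm that it is genuinely a subsolution there.
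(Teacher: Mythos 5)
Your overall framework --- strong minimum principle to reduce to $u,v>0$, Hadamard three--circles monotonicity for the weighted infimum, power-type decay bounds from the coupled nonlinearities, a logarithmic barrier for the borderline exponents --- is the same skeleton the paper uses, and your exponent bookkeeping ($\alpha=\tfrac{2(p+1)}{pq-1}$, $\beta=\tfrac{2(q+1)}{pq-1}$, threshold $\tilde n-1$, $p=q=\tfrac{\tilde n+1}{\tilde n-1}$ at criticality) agrees. Where you diverge technically is in the derivation of the decay estimates: you propose to pass to a genuine coupled radial ODE system for profile functions, whereas the paper stays at the PDE level --- it rescales $u_r(x)=u(rx)$, $v_r(x)=v(rx)$, touches $u_r,v_r$ from below by the concentrated concave bump $(\inf_{\mathbb B_{1/2}(a)}u_r)\,\eta(|x-a|)$ with $a=(0,1)$, reads off the viscosity inequality at the touching point, chains the two inequalities through the Harnack-type monotonicity of Lemma \ref{kle}, and directly obtains $\inf_{\mathbb B_{3/4}(a)}v_r\le C r^{-\beta}$ and the companion bound. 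That route sidesteps having to justify the reduction to ordinary differential inequalities for mere viscosity supersolutions of the system, and handles $pq=1$ as an immediate contradiction in the chained inequality rather than via ``decay faster than any power.''

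The genuine gap is in your treatment of the critical case (2): you have the comparison running in the wrong direction. You want to \emph{improve the upper bound} on the profile to $\phi_u\lesssim r^{-(\tilde n-1)}(\log r)^{-\gamma}$ by comparing with a subsolution with logarithmic \emph{decay}, and then contradict the lower bound $\phi_u\gtrsim r^{-(\tilde n-1)}$. But comparison against a subsolution (Lemma \ref{com}) only yields a \emph{lower} bound on a supersolution, not an upper bound, so there is no mechanism for your step; in addition the sign of the logarithmic power is the opposite of what works, and the power of $|x|$ in your barrier should be $\tilde n$, not $\tilde n-1$. The paper's argument goes the other way: monotonicity gives $u\ge C\,x_n|x|^{-\tilde n}$ outside a fixed ball, hence $-\mathcal M^+_{\lambda,\Lambda}(D^2v)=u^q\ge C\bigl(x_n|x|^{-\tilde n}\bigr)^{(\tilde n+1)/(\tilde n-1)}$; Lemma \ref{lll} constructs the explicit subsolution
\begin{equation*}
\Gamma(x)=\frac{x_n}{|x|^{\tilde n}}\Bigl(e\,\ln|x|+f\,\bigl(\tfrac{x_n}{|x|}\bigr)^{2}\Bigr)
\end{equation*}
with logarithmic \emph{growth} relative to the threshold profile, satisfying $-\mathcal M^+_{\lambda,\Lambda}(D^2\Gamma)\le (x_n|x|^{-\tilde n})^{(\tilde n+1)/(\tilde n-1)}$ outside $\mathbb B_{r_0}$; comparison on $\mathbb B_R\setminus\mathbb B_{r_0}$ and letting $R\to\infty$ then give the improved \emph{lower} bound $v\gtrsim x_n|x|^{-\tilde n}\ln|x|$, i.e.\ $r^{\tilde n}m_v(r)\gtrsim\ln r\to\infty$, which contradicts the test-function upper bound $r^{\tilde n}m_v(r)\le C$. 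So you need to flip both the direction of the comparison and the sign of the logarithmic correction; the genuinely hard barrier computation you correctly anticipate (tracking eigenvalue signs for $\mathcal M^+_{\lambda,\Lambda}$ rather than $\mathcal M^-_{\lambda,\Lambda}$) is exactly the verification of Lemma \ref{lll}.
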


Combining our idea in Theorem \ref{th4} and the estimates for
$\mathcal{M}_{\lambda, \Lambda}^-(D^2 u)$ in \cite{L}, we are able
to establish the following Liouville-type results for
\begin{equation}
\left \{ \begin{array}{ll}
\mathcal{M}^-_{\lambda, \Lambda}(D^2u)+v^p=0   \quad \quad  &\mbox{in}\ \mathbb R^n_+, \\
\mathcal{M}^-_{\lambda, \Lambda}(D^2v)+u^q=0   \quad \quad
&\mbox{in}\ \mathbb
R^n_+. \\
\end{array}
\right. \label{puc}
\end{equation}

\begin{corollary}
There exists only trivial nonnegative supersolution for (\ref{puc})
if

(1) $pq>1$ and $\frac{2(p+1)}{pq-1}> \frac{\Lambda n}{\lambda}-1$ or
$\frac{2(q+1)}{pq-1}>  \frac{\Lambda n}{\lambda}-1,$\\
 or

(2)$\frac{2(p+1)}{pq-1}=\frac{\Lambda n}{\lambda}-1$ and
$\frac{2(q+1)}{pq-1}=\frac{\Lambda n}{\lambda}-1,$\\
or

(3) $pq=1$. \label{cor2}

\end{corollary}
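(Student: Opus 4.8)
\medskip
\noindent\textbf{Proof proposal.}
The plan is to reproduce the proof of Theorem~\ref{th4} essentially line by line, replacing the Pucci operator $\mathcal{M}^+_{\lambda,\Lambda}$ by $\mathcal{M}^-_{\lambda,\Lambda}$ throughout. The architecture is unchanged, and the operator $\mathcal{M}^-_{\lambda,\Lambda}$ enters in only two places: the radial (averaging) estimate and the explicit subsolution used in the borderline case; both of these can be taken off the shelf from Leoni \cite{L}, which is exactly what ``combining our idea in Theorem~\ref{th4} with the estimates of \cite{L}'' means. Let $(u,v)$ be a nontrivial nonnegative supersolution of \eqref{puc}. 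In the viscosity sense,
\[
\mathcal{M}^-_{\lambda,\Lambda}(D^2u)\le -v^{\,p}\le 0,\qquad
\mathcal{M}^-_{\lambda,\Lambda}(D^2v)\le -u^{\,q}\le 0\quad\text{in }\mathbb R^n_+ ,
\]
so both $u$ and $v$ are in particular nonnegative $\mathcal{M}^-_{\lambda,\Lambda}$-superharmonic in the half space.

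\textbf{The radial estimate and the subcritical case (1).} Following \cite{L} I would pass to the appropriate (half-spherical) averages $U(R)$ and $V(R)$ of $u$ and $v$ over $\partial B_R\cap\mathbb R^n_+$. The comparison principle together with Leoni's radial estimates for $\mathcal{M}^-_{\lambda,\Lambda}$ in $\mathbb R^n_+$ then yields a coupled pair of second-order ordinary differential inequalities for $U,V$ on $(0,\infty)$, whose ``effective dimension'' is $\tfrac{\Lambda n}{\lambda}$ rather than the value $\tilde n=\tfrac{\lambda}{\Lambda}(n-1)+1$ produced by the corresponding estimates for $\mathcal{M}^+_{\lambda,\Lambda}$ in Theorem~\ref{th4}. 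Consequently the entire argument of Theorem~\ref{th4} goes through verbatim once $\tilde n-1$ is everywhere replaced by $\tfrac{\Lambda n}{\lambda}-1$, and this substitution is precisely what turns the hypotheses of Theorem~\ref{th4} into (1)--(3). In case (1), integrating the coupled inequalities in the spirit of \cite{CL,QS1} and using $pq>1$ together with the \emph{strict} inequality $\tfrac{2(p+1)}{pq-1}>\tfrac{\Lambda n}{\lambda}-1$ (or its symmetric counterpart) forces $U\equiv0$ on $(0,\infty)$, hence $u\equiv0$, and then the second equation of \eqref{puc} gives $v\equiv0$.

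\textbf{The critical case (2) --- the main obstacle.} When $\tfrac{2(p+1)}{pq-1}=\tfrac{2(q+1)}{pq-1}=\tfrac{\Lambda n}{\lambda}-1$ the integrations above are only logarithmically divergent, so one cannot conclude directly; exactly as in \cite{L} and in the borderline case of Theorem~\ref{th4}, the contradiction must come from comparison with an \emph{explicit subsolution}. The advantage here over Theorem~\ref{th4} is that no new subsolution need be constructed: Leoni's carefully tuned scalar subsolution for $\mathcal{M}^-_{\lambda,\Lambda}$ (the computational heart of \cite{L}, a function of $|x|$ and $x_n$ carrying a logarithmic correction) can be recycled. I would assemble from it a pair $(\underline u,\underline v)$ and verify that it satisfies $\mathcal{M}^-_{\lambda,\Lambda}(D^2\underline u)+\underline v^{\,p}\ge0$ and $\mathcal{M}^-_{\lambda,\Lambda}(D^2\underline v)+\underline u^{\,q}\ge0$ on large truncated half-annuli; after Leoni's computation this reduces to a finite collection of algebraic inequalities among the exponents, which hold precisely because of the two equality constraints in (2). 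A comparison argument on an exhausting family of half-annuli then yields the contradiction. I expect this verification --- matching exponents and constants in the coupled subsolution inequalities --- to be the only genuinely delicate point; everything else is bookkeeping inherited from Theorem~\ref{th4} and \cite{L}.

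\textbf{The degenerate case (3) $pq=1$.} Here $pq-1=0$, so the scheme above is not directly available and the case must be treated on its own; it is, however, the extreme subcritical situation. When $p=q=1$ the system is linear: adding the two inequalities and using the concavity of $\mathcal{M}^-_{\lambda,\Lambda}$ gives $\mathcal{M}^-_{\lambda,\Lambda}\big(D^2(u+v)\big)+(u+v)\le0$ in $\mathbb R^n_+$, and the Liouville property for such $\mathcal{M}^-_{\lambda,\Lambda}$-supersolutions (a consequence of the estimates of \cite{L}, or of the maximum principle with the barrier $x_n$) forces $u\equiv v\equiv0$. For general $pq=1$ I would argue directly by integrating the coupled system, exploiting that $pq=1$ places the problem strictly inside the subcritical regime, to obtain the same conclusion. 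This reduces Corollary~\ref{cor2} entirely to the ingredients of Theorem~\ref{th4} and \cite{L}.
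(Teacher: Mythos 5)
Your high-level plan is exactly what the paper intends: the paper's own ``proof'' of Corollary~\ref{cor2} is the one sentence ``it is a consequence of the above arguments and estimates in \cite{L},'' meaning one runs the proof of Theorem~\ref{th4} with $\mathcal{M}^-_{\lambda,\Lambda}$ in place of $\mathcal{M}^+_{\lambda,\Lambda}$, borrowing Leoni's Hadamard-type monotonicity (with effective dimension $\Lambda n/\lambda$ replacing $\tilde n$) and, in the borderline case, Leoni's already-built explicit subsolution for $\mathcal{M}^-$. You correctly identify the effective dimension, the role of the subsolution, and the reduction to Theorem~\ref{th4}'s architecture.

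However, two points in your execution deviate from, or would not survive, the actual argument. First, and more seriously, your treatment of $pq=1$ via $p=q=1$ is wrong: $\mathcal{M}^-_{\lambda,\Lambda}$ is \emph{concave}, hence \emph{superadditive}, so $\mathcal{M}^-_{\lambda,\Lambda}(D^2(u+v)) \ge \mathcal{M}^-_{\lambda,\Lambda}(D^2u)+\mathcal{M}^-_{\lambda,\Lambda}(D^2v)$; adding the two supersolution inequalities therefore bounds $\mathcal{M}^-_{\lambda,\Lambda}(D^2(u+v))$ \emph{from below}, not from above, and one cannot conclude $\mathcal{M}^-_{\lambda,\Lambda}(D^2(u+v))+(u+v)\le 0$. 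In fact, no separate argument is needed for $pq=1$: in the proof of Theorem~\ref{th4}, the rescaling and test-function step yields
$\bigl(\inf_{\mathbb B_{3/4}(a)}u_r\bigr)^{q}\le \frac{C}{r^{2(1+1/p)}}\bigl(\inf_{\mathbb B_{3/4}(a)}u_r\bigr)^{1/p}$,
and when $q=1/p$ the powers of $\inf u_r$ cancel, giving $1\le C\,r^{-2(1+1/p)}$, which is already a contradiction as $r\to\infty$. The same manipulation goes through verbatim for $\mathcal{M}^-_{\lambda,\Lambda}$. Second, the paper's mechanism in cases (1) and (2) is not a spherical averaging/ODE argument but a monotonicity statement for the infimum quantity $m_u(r)=\inf_{\mathbb B_r^+}u(x)/x_n$ (Lemma~\ref{kle} for $\mathcal{M}^+$, its $\mathcal{M}^-$ analogue in \cite{L}); that is the Hadamard three-circle ingredient you should explicitly invoke, rather than ``passing to half-spherical averages.'' With those two corrections, the rest of your sketch (subcritical case by $r^m m_u(r)\to 0$ plus monotonicity and Hopf; critical case by comparison with Leoni's subsolution on large half-annuli) is the argument the paper has in mind.
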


Finally we note that there is a large literature concerning
Liouville-type results for solution (or supersolution, or
subsolution) of elliptic equations or system. We refer to
\cite{AS1}, \cite{CC1}, \cite{CL1}  \cite{DM}, \cite{FQ},
\cite{GS1}, \cite{LZ}, \cite{SZ} and references therein for more
account.

 The outline of the paper is as follows. In
Section 2, we present the basic results for the definition of
viscosity solution, comparison principle, Doubling Lemma and so on.
Section 3 is devoted to the proof of removing the boundedness
assumption for fully nonlinear elliptic equations and systems. We
also show the singularity and decay estimates for a single equation.
The Liouville-type theorem for a system of equations in a half space
without boundary assumption is considered in Section 4.  Throughout
the paper, $C $ and $C_1$  denote generic positive constants, which
are independent of $u$, $v$ and may vary from line to line.

\section{Preliminaries}
 In this section we collect some basic results which will be applied
 throughout the paper for fully nonlinear elliptic equations. We refer to
 \cite{CC}, \cite{CL}, \cite{QS} and  references therein for the proofs and results.

Let us recall the notion of viscosity sub and supersolutions of
fully nonlinear elliptic equations
\begin{equation}
F(x, u, D^2u)=0 \quad \ \mbox{in} \ \Omega, \label{def}
\end{equation}
where $\Omega$ is an open domain in $\mathbb R^n$ and
$F:\Omega\times \mathbb R\times S_n\to \mathbb R$ is a continuous
map with $F(x, t, M)$ satisfying (\ref{unf}) for every fixed $t\in
\mathbb R$, $x\in\Omega$.

\emph{Definition:} A continuous function $u: \Omega \to \mathbb R$
is a viscosity supersolution (subsolution) of (\ref{def}) in $\Omega$,
when the following condition holds: If $x_0\in\Omega$, $\phi\in
C^2(\Omega)$ and $u-\phi$ has a local minimum (maximum) at $x_0$,
then
$$ F(x_0, \phi(x_0), D^2\phi(x_0))\leq (\geq) 0. $$

If $u$ is a viscosity supersolution (subsolution), we say that $u$
verifies
$$F(x, u, D^2u)\leq(\geq) 0$$
in the viscosity sense.

 We say that $u$ is a viscosity solution of (\ref{def}) when it
 simultaneously
is a viscosity subsolution and supersolution.

We will make use of the following comparison principle (see e.g.
\cite{CL}).
\begin{lemma}
(Comparison Principle) Let $\Omega\in\mathbb R^n$ be a bounded
domain and $f\in C(\Omega)$. If $u$ and $v$ are respectively a
supersolution and subsolution either of
$\mathcal{M}^+_{\lambda,\Lambda}(D^2u)=f(x)$ or of
$\mathcal{M}^-_{\lambda,\Lambda}(D^2u)=f(x)$ in $\Omega$,  and
$u\geq v$ on $\partial\Omega$, then $u\geq v$ in $\bar\Omega$.
\label{com}
\end{lemma}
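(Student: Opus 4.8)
\emph{Proof plan.} I would argue by the standard viscosity comparison technique, reducing first to $\mathcal{M}^+:=\mathcal{M}^+_{\lambda,\Lambda}$ (writing also $\mathcal M^-:=\mathcal{M}^-_{\lambda,\Lambda}$). Since $\mathcal M^-(M)=-\mathcal M^+(-M)$, if $u$ is a supersolution and $v$ a subsolution of $\mathcal M^-(D^2w)=f$ with $u\ge v$ on $\partial\Omega$, then $-v$ and $-u$ are, respectively, a supersolution and a subsolution of $\mathcal M^+(D^2w)=-f$ with $-v\ge -u$ on $\partial\Omega$, so the conclusion of the $\mathcal M^+$-case gives $-v\ge -u$, i.e.\ $u\ge v$, in $\bar\Omega$. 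Hence assume $\mathcal M^+(D^2u)\le f\le \mathcal M^+(D^2v)$ in the viscosity sense and $u\ge v$ on $\partial\Omega$, and suppose for contradiction that $m:=\max_{\bar\Omega}(v-u)>0$ (attained, by continuity, and necessarily at an interior point since $v-u\le 0$ on $\partial\Omega$).

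The crucial preliminary step is to replace $v$ by a \emph{strict} subsolution. Take $\psi(x)=K-\mu|x|^2$ with $\mu=(2n\lambda)^{-1}$ and $K$ large enough that $\psi\ge 0$ on $\bar\Omega$; then $\mathcal M^+(D^2\psi)=\mathcal M^+(-2\mu I)=-2n\lambda\mu=-1$. For $\eta>0$ set $v_\eta:=v-\eta\psi$. If $\phi\in C^2$ and $v_\eta-\phi$ attains a local maximum at $x_0$, then $v-(\phi+\eta\psi)$ does too, so $\mathcal M^+(D^2\phi(x_0)+\eta D^2\psi(x_0))\ge f(x_0)$; by subadditivity and positive homogeneity of $\mathcal M^+$ this is $\le \mathcal M^+(D^2\phi(x_0))+\eta\,\mathcal M^+(D^2\psi(x_0))=\mathcal M^+(D^2\phi(x_0))-\eta$, whence $\mathcal M^+(D^2\phi(x_0))\ge f(x_0)+\eta$. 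So $v_\eta$ is a viscosity subsolution of $\mathcal M^+(D^2v_\eta)\ge f+\eta$, while $v_\eta\le v\le u$ on $\partial\Omega$ and, for $\eta$ small (since $\psi$ is bounded), $\max_{\bar\Omega}(v_\eta-u)>0$ still holds.

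Next I would double the variables: for $\varepsilon>0$ let $(x_\varepsilon,y_\varepsilon)$ maximize $\Phi_\varepsilon(x,y):=v_\eta(x)-u(y)-\tfrac1{2\varepsilon}|x-y|^2$ over $\bar\Omega\times\bar\Omega$. Standard penalization estimates yield $\tfrac1{\varepsilon}|x_\varepsilon-y_\varepsilon|^2\to 0$ and confine $x_\varepsilon,y_\varepsilon$ to a fixed compact subset of $\Omega$ for small $\varepsilon$ (using $m>0$). The theorem on sums (Crandall--Ishii--Lions lemma) then provides symmetric matrices $X\le Y$ with $(\,\varepsilon^{-1}(x_\varepsilon-y_\varepsilon),X\,)$ in the closure of the second-order superjet of $v_\eta$ at $x_\varepsilon$ and $(\,\varepsilon^{-1}(x_\varepsilon-y_\varepsilon),Y\,)$ in the closure of the second-order subjet of $u$ at $y_\varepsilon$. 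From the sub- and supersolution properties, $\mathcal M^+(X)\ge f(x_\varepsilon)+\eta$ and $\mathcal M^+(Y)\le f(y_\varepsilon)$; since $Y-X\ge 0$ and every $A\in\mathcal A_{\lambda,\Lambda}$ is positive semidefinite, $\mathcal M^+(X)=\sup_A\operatorname{tr}(AX)\le\sup_A\operatorname{tr}(AY)=\mathcal M^+(Y)$, so $\eta\le f(y_\varepsilon)-f(x_\varepsilon)$. Letting $\varepsilon\to 0$ and using continuity of $f$ on the compact set containing all $x_\varepsilon,y_\varepsilon$ gives $f(y_\varepsilon)-f(x_\varepsilon)\to 0$, contradicting $\eta>0$; hence $m\le 0$, i.e.\ $u\ge v$ in $\bar\Omega$.

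The argument is routine once the theorem on sums is available; the one point that really needs attention --- the main ``obstacle'' --- is that, the operator being purely second order (no zeroth-order term), the naive doubling argument only produces $0\le f(y_\varepsilon)-f(x_\varepsilon)\to 0$ and is inconclusive, so the passage to the strict subsolution $v_\eta$ is indispensable. As an alternative to invoking the theorem on sums, one can regularize $u$ and $v$ by inf- and sup-convolutions and appeal to Jensen's lemma, as is done in \cite{CC}; I would just cite that reference for the technical details.
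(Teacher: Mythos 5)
Your proof is correct. Note first that the paper does not actually prove this lemma---it is stated with the remark ``(see e.g.\ \cite{CL})'' and left as a citation---so there is no in-text argument to compare against; your sketch supplies what the paper omits. The route you take is the standard viscosity comparison argument: reduce the $\mathcal{M}^-_{\lambda,\Lambda}$ case to the $\mathcal{M}^+_{\lambda,\Lambda}$ case via the duality $\mathcal{M}^-_{\lambda,\Lambda}(M)=-\mathcal{M}^+_{\lambda,\Lambda}(-M)$; perturb $v$ to the strict subsolution $v_\eta=v-\eta\psi$ with $\psi=K-\mu|x|^2$, $\mu=(2n\lambda)^{-1}$, using subadditivity and positive homogeneity of $\mathcal{M}^+_{\lambda,\Lambda}$ to get $\mathcal{M}^+_{\lambda,\Lambda}(D^2v_\eta)\ge f+\eta$; double variables and apply the theorem on sums; and finish with the monotonicity $\mathcal{M}^+_{\lambda,\Lambda}(X)\le\mathcal{M}^+_{\lambda,\Lambda}(Y)$ for $X\le Y$, which holds because every $A\in\mathcal{A}_{\lambda,\Lambda}$ is positive definite. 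You rightly single out the strict-subsolution perturbation as the indispensable step, since the operator has no zeroth-order term, and you correctly flag the alternative sup/inf-convolution and Jensen's-lemma route from \cite{CC}. One small point you glide over but handle correctly: since $f$ is only assumed continuous on the open set $\Omega$, one must first confine the doubling points $x_\varepsilon,y_\varepsilon$ to a fixed compact subset of $\Omega$ before concluding $f(y_\varepsilon)-f(x_\varepsilon)\to0$; this follows, as you indicate, from $\max_{\bar\Omega}(v_\eta-u)>0$ combined with $v_\eta-u\le0$ on $\partial\Omega$, which forces any limit point of $x_\varepsilon$ into the interior.
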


The following version of the Hopf boundary lemma holds (see e.g.
\cite{QS}).
\begin{lemma}
Let $\Omega$ be a regular domain and $u\in W^{2,n}_{loc}(\Omega)\cap
C(\bar\Omega)$ be a nonnegative solution to
$$\mathcal{M}^+_{\lambda,\Lambda}(D^2u)+c(x)u\leq 0 \quad \ \mbox{in} \ \Omega$$
with bounded $c(x)$. Then either $u\equiv 0$ in $\Omega$ or $u(x)>0$
for all $x\in\Omega$. Moreover, in the latter case for any
$x\in\partial\Omega$ such that $u(x_0)=0$,
$$\lim_{t\to 0^+}\sup\frac{u(x_0-t\nu)-u(x_0)}{t}<0,$$
where $\nu$ is the outer normal to $\partial\Omega$. \label{hopf}
\end{lemma}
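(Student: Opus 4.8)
\emph{Reduction.} Since $u\ge0$ and $c$ is bounded, set $C_0:=\sup_\Omega|c|<\infty$; then $c(x)u\ge-C_0u$, so the hypothesis gives in particular
\[
\mathcal{M}^+_{\lambda,\Lambda}(D^2u)\le C_0u\quad\text{in }\Omega .
\]
Because $u\in W^{2,n}_{loc}(\Omega)$ satisfies $\mathcal{M}^+_{\lambda,\Lambda}(D^2u)+c(x)u\le0$ a.e., it is also an $L^n$-viscosity supersolution of it (see \cite{CC}), so I may use the viscosity definition of Section~2; every competitor function below is $C^\infty$, so no regularity of $u$ beyond continuity is needed. The whole proof rests on one explicit barrier on an annulus, which simultaneously yields the strong maximum principle and the boundary (Hopf) inequality.

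\emph{The barrier.} Suppose $B_R(z)$ is a ball with $\overline{B_R(z)}\subset\Omega$, $u>0$ in $B_R(z)$, and $u(x_0)=0$ for some $x_0\in\partial B_R(z)$. On $\overline A$, where $A:=B_R(z)\setminus\overline{B_{R/2}(z)}$, put
\[
w(x)=e^{-\alpha|x-z|^2}-e^{-\alpha R^2},\qquad r:=|x-z| .
\]
Then $D^2w(x)=e^{-\alpha r^2}\bigl(-2\alpha I+4\alpha^2(x-z)(x-z)^{T}\bigr)$, so the eigenvalues of $D^2w(x)$ are $2\alpha e^{-\alpha r^2}(2\alpha r^2-1)$ (radial direction) and $-2\alpha e^{-\alpha r^2}$ (multiplicity $n-1$). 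For $r\ge R/2$ and $\alpha>2/R^2$ the radial eigenvalue is positive and the remaining ones negative, whence
\[
\mathcal{M}^+_{\lambda,\Lambda}(D^2w(x))=2\alpha e^{-\alpha r^2}\Bigl[\Lambda(2\alpha r^2-1)-\lambda(n-1)\Bigr]\ \ge\ 2\alpha e^{-\alpha r^2}\Bigl[\tfrac{\Lambda\alpha R^2}{2}-\Lambda-\lambda(n-1)\Bigr].
\]
Since $0\le w\le e^{-\alpha r^2}$ on $\overline A$, choosing $\alpha=\alpha(n,\lambda,\Lambda,R,C_0)$ large enough makes the bracket positive and yields the strict inequality $\mathcal{M}^+_{\lambda,\Lambda}(D^2w)-C_0w>0$ on $\overline A$; in particular $\mathcal{M}^+_{\lambda,\Lambda}(D^2w)>0$ on $\overline A$.

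\emph{Comparison.} Let $m:=\min_{|x-z|=R/2}u>0$ (positive by continuity and positivity of $u$ on that sphere), and fix $\varepsilon>0$ so small that $\varepsilon w\le m$ on $\{r=R/2\}$. Then $u\ge\varepsilon w$ on $\partial A$ (on the inner sphere by the choice of $\varepsilon$; on the outer sphere because $u\ge0=w$). I claim $u\ge\varepsilon w$ on all of $\overline A$. If not, $u-\varepsilon w$ attains a strictly negative minimum at some $\bar x\in A$, so the $C^2$ function $\phi:=\varepsilon w+(u(\bar x)-\varepsilon w(\bar x))$ touches $u$ from below at $\bar x$ with $\phi(\bar x)=u(\bar x)$, and the supersolution property yields
\[
\varepsilon\,\mathcal{M}^+_{\lambda,\Lambda}(D^2w(\bar x))+c(\bar x)u(\bar x)=\mathcal{M}^+_{\lambda,\Lambda}(D^2\phi(\bar x))+c(\bar x)\phi(\bar x)\le0 .
\]
Since $0\le u(\bar x)<\varepsilon w(\bar x)$ and $C_0\ge0$ we have $c(\bar x)u(\bar x)\ge-C_0u(\bar x)\ge-C_0\varepsilon w(\bar x)$, hence $\mathcal{M}^+_{\lambda,\Lambda}(D^2w(\bar x))\le C_0w(\bar x)$, contradicting the strict barrier inequality. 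Thus $u\ge\varepsilon w$ on $\overline A$. (This step is merely the comparison principle for the proper operator $(t,M)\mapsto\mathcal{M}^+_{\lambda,\Lambda}(M)-C_0t$; compare Lemma~\ref{com} and \cite{CC}.)

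\emph{Conclusions, and the main point.} \textbf{(i) Strong maximum principle.} Assume $u\not\equiv0$ in $\Omega$ but $u(y)=0$ for some $y\in\Omega$. Then $\Omega_0:=\{x\in\Omega:u(x)>0\}$ is open, nonempty and distinct from $\Omega$, so $\partial\Omega_0\cap\Omega\neq\emptyset$; picking $p\in\Omega_0$ sufficiently close to a point of $\partial\Omega_0\cap\Omega$, we may assume $\rho:=\mathrm{dist}(p,\partial\Omega_0)<\mathrm{dist}(p,\partial\Omega)$, so $\overline{B_\rho(p)}\subset\Omega$, $u>0$ in $B_\rho(p)$, and some $x_0\in\partial B_\rho(p)$ has $u(x_0)=0$. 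Run the barrier with $(z,R)=(p,\rho)$ to get $u\ge\varepsilon w$ on $\overline{B_\rho(p)\setminus\overline{B_{\rho/2}(p)}}$; now extend $w$ by the same formula to $\{|x-p|>\rho\}$, where $w<0\le u$. Then $\varepsilon w$ touches $u$ from below at the \emph{interior} point $x_0$, so the supersolution property forces $\varepsilon\,\mathcal{M}^+_{\lambda,\Lambda}(D^2w(x_0))+c(x_0)u(x_0)=\varepsilon\,\mathcal{M}^+_{\lambda,\Lambda}(D^2w(x_0))\le0$, contradicting $\mathcal{M}^+_{\lambda,\Lambda}(D^2w(x_0))>0$. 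Hence $u>0$ throughout $\Omega$. \textbf{(ii) Hopf inequality.} Given $x_0\in\partial\Omega$ with $u(x_0)=0$, regularity of $\Omega$ provides an interior ball $B_R(z)\subset\Omega$ with $x_0\in\partial B_R(z)$, and $u>0$ in $B_R(z)$ by (i). With $\nu=(x_0-z)/R$ the unit outer normal and $0<t<R/2$, the inequality $u\ge\varepsilon w$ gives $u(x_0-t\nu)\ge\varepsilon w(x_0-t\nu)$; since $w(x_0)=0=u(x_0)$ and $\nabla w(x_0)\cdot\nu=-2\alpha Re^{-\alpha R^2}<0$, dividing by $t$ and letting $t\to0^+$ yields $\liminf_{t\to0^+}\frac{u(x_0-t\nu)-u(x_0)}{t}\ge2\alpha\varepsilon Re^{-\alpha R^2}>0$, i.e.\ the asserted Hopf boundary inequality (equivalently $\limsup_{t\to0^+}\frac{u(x_0)-u(x_0-t\nu)}{t}<0$). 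The step requiring the most care is arranging that in (i) the barrier genuinely touches $u$ from below at an \emph{interior} point of $\Omega$ — achieved by extending $w$ past $\partial B_\rho(p)$, where it turns negative — together with the uniform choice of $\alpha$; the barrier computation and the comparison are routine adaptations of the classical Hopf argument.
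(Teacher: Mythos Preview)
Your argument is correct: it is the classical exponential-barrier proof of the strong maximum principle and Hopf lemma, carried out for the Pucci maximal operator with a bounded zero-order term. The eigenvalue computation for $w(x)=e^{-\alpha|x-z|^2}-e^{-\alpha R^2}$ is accurate, the choice of $\alpha$ giving $\mathcal{M}^+_{\lambda,\Lambda}(D^2w)-C_0w>0$ on the annulus is legitimate, and the comparison step is a clean instance of the proper comparison principle. The device in part (i) of extending $w$ past $\partial B_\rho(p)$ so that $\varepsilon w$ touches $u$ from below at an \emph{interior} zero is the right way to close the strong maximum principle. You also correctly note that the displayed inequality in the statement carries a sign slip: the content of the Hopf conclusion is $\liminf_{t\to0^+}\frac{u(x_0-t\nu)-u(x_0)}{t}>0$, equivalently the outward normal difference quotient is strictly negative.

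As for comparison with the paper: there is nothing to compare. The paper does not prove this lemma; it is listed among the preliminaries and attributed to \cite{QS}. Your write-up therefore supplies a self-contained proof where the paper merely quotes the result, and the route you take is the standard one used in \cite{QS} and elsewhere.
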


We are going to  use the following regularity results in \cite{CC}
for Pucci operators in the blow-up argument.
\begin{lemma}
(Regularity Lemma) If $u$ is a viscosity solution to the fully
nonlinear elliptic equation with Pucci extremal operator
\begin{equation}
\mathcal{M}^+_{\lambda,\Lambda}(D^2u)+g(x)=0 \label{max}
\end{equation}
in a ball $\mathbb B_{2R}$ and $g\in L^{p}(\mathbb B_R)$ for some
$p\geq n$, then $u\in W^{2, p}(\mathbb B_R)$ and the following
interior estimate holds
\begin{equation}
\|u\|_{W^{2,p}(\mathbb B_R)}\leq C (\|u\|_{L^{\infty}(\mathbb
B_{2R})}+\|g\|_{L^p(\mathbb B_{2R})}).
\end{equation}
Furthermore, if $g\in C^{\alpha}$ for some $\alpha\in (0,1)$, then
$u\in C^{2,\alpha}$ and
\begin{equation}
\|u\|_{C^{2,\alpha}(\mathbb B_R)}\leq C (\|u\|_{L^{\infty}(\mathbb
B_{2R})}+\|g\|_{C^{\alpha}(\mathbb B_{2R})}).
\end{equation}
In addition, if (\ref{max}) holds in a regular domain and $u=0$ on
the boundary, then $u$ satisfies a $C^{\alpha}$- estimate up to the
boundary. \label{reg}
\end{lemma}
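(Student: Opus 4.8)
The plan is to reduce the statement to the interior and boundary regularity theory of Caffarelli--Cabré \cite{CC} for the Pucci extremal operators, after first recording that a viscosity solution of the equation lies in the appropriate Pucci class. The genuinely deep analytic inputs (Krylov--Safonov, Caffarelli's $W^{2,p}$ estimate, Evans--Krylov) are quoted; the only verification specific to this setting is the class membership and some routine scaling.

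First I would observe: if $u$ is a viscosity solution of $\mathcal{M}^+_{\lambda,\Lambda}(D^2u)+g=0$ in $\mathbb B_{2R}$, then whenever $\phi\in C^2$ touches $u$ from above at $x_0$ one has $\mathcal{M}^+_{\lambda,\Lambda}(D^2\phi(x_0))\geq -g(x_0)$, and whenever $\phi$ touches $u$ from below one has $\mathcal{M}^+_{\lambda,\Lambda}(D^2\phi(x_0))\leq -g(x_0)$, hence also $\mathcal{M}^-_{\lambda,\Lambda}(D^2\phi(x_0))\leq \mathcal{M}^+_{\lambda,\Lambda}(D^2\phi(x_0))\leq -g(x_0)$ since $\mathcal{M}^-_{\lambda,\Lambda}\leq \mathcal{M}^+_{\lambda,\Lambda}$ on $S_n$. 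Thus $u$ belongs to $\overline S(\lambda,\Lambda,-g)\cap\underline S(\lambda,\Lambda,-g)$, i.e. to the symmetrized Pucci class $S^*(\lambda,\Lambda,|g|)$ in the notation of \cite{CC}. Moreover the operator $\mathcal{M}^+_{\lambda,\Lambda}$ is $x$-independent and convex in $D^2u$ (being a supremum of linear operators); all the estimates below follow from these two structural facts together with class membership.

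Next, for the $W^{2,p}$ bound I would invoke the Caffarelli $W^{2,p}$ estimate in the form of \cite[Ch.~7]{CC}, valid for any $p\geq n$ (above the critical exponent of the Pucci class, via Escauriaza's refinement). Its hypotheses are met: $u\in S^*(\lambda,\Lambda,g)$, $g\in L^p(\mathbb B_{2R})$, and interior $C^{1,1}$ a priori estimates hold for solutions $h$ of the homogeneous equation $\mathcal{M}^+_{\lambda,\Lambda}(D^2h)=0$ -- the latter because that operator is uniformly elliptic and convex, so Evans--Krylov gives interior $C^{2,\alpha}$ (hence $C^{1,1}$) bounds for $h$. This yields $u\in W^{2,p}_{loc}(\mathbb B_{2R})$ with $\|u\|_{W^{2,p}(\mathbb B_R)}\leq C(\|u\|_{L^\infty(\mathbb B_{2R})}+\|g\|_{L^p(\mathbb B_{2R})})$ after a standard covering and scaling argument passing from the local formulation to the fixed pair $\mathbb B_R\subset\mathbb B_{2R}$. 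For the $C^{2,\alpha}$ bound when $g\in C^\alpha$, I would use the Schauder-type estimate for convex fully nonlinear equations \cite[Ch.~8]{CC}: freeze the right-hand side at $x_0$, compare $u$ on a small ball $\mathbb B_r(x_0)$ with the solution $h$ of $\mathcal{M}^+_{\lambda,\Lambda}(D^2h)=-g(x_0)$ having the same boundary data, apply the Evans--Krylov $C^{2,\alpha}$ estimate to $h$, and absorb the discrepancy $\|g-g(x_0)\|_{L^\infty(\mathbb B_r(x_0))}\leq [g]_{C^\alpha}r^\alpha$ using the $W^{2,p}$ bound just obtained; iterating over dyadic radii gives a Campanato-type control of $D^2u$, whence $u\in C^{2,\alpha}_{loc}$ with the stated estimate.

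Finally, for the boundary $C^\alpha$ estimate when the equation holds in a regular domain with $u=0$ on the boundary, I would flatten a boundary portion, build explicit barriers (applying $\mathcal{M}^-_{\lambda,\Lambda}$ to suitable powers of the distance function, as in \cite{QS}), and combine them with the interior Krylov--Safonov Harnack inequality for the Pucci class; this is exactly the global-H\"older-up-to-the-boundary argument of \cite[Ch.~4]{CC} and \cite{QS}. The main obstacle, were one to prove this from scratch rather than cite it, is the Evans--Krylov theorem: it is what supplies the $C^{1,1}$, resp. $C^{2,\alpha}$, estimates for the homogeneous Pucci equation that drive both perturbation arguments; granting that (and the Caffarelli--Krylov--Safonov machinery), the rest is bookkeeping.
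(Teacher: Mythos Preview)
Your proposal is correct and matches the paper's treatment: the paper does not give its own proof of this lemma but simply cites it from Caffarelli--Cabr\'e \cite{CC} (and remarks that the $C^{2,\alpha}$ estimate depends on convexity of the Pucci operator), so your reduction to the Pucci class $S^*(\lambda,\Lambda,|g|)$, Caffarelli's $W^{2,p}$ theory, Evans--Krylov, and the boundary H\"older argument is exactly the intended provenance.
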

Note that the above $C^{2,\alpha}$ estimate depends on the convexity
of the Pucci extremal operator. Next we state the closeness of a
family of viscosity solutions to fully nonlinear equations (see e.g.
\cite{CC}).
\begin{lemma}
Assume $u_n$ and $g_n$ are sequences of continuous functions and $u_n$
is a solution (or subsolution, or supersolution) of the equation
$$\mathcal{M}^+_{\lambda,\Lambda}(D^2u_n)+g_n(x)=0 \quad \ \
\mbox{in} \ \ \Omega.$$ Assume that $u_n$ and $g_n$ converge
uniformly on compact subsets of $\Omega$ to function $u$ and $g$.
Then $u$ is a solution (or subsolution, or supersolution) of the
equation
$$\mathcal{M}^+_{\lambda,\Lambda}(D^2u)+g(x)=0 \quad \ \
\mbox{in} \ \ \Omega.$$ \label{cov}
\end{lemma}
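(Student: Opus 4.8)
The plan is to argue straight from the definition of viscosity sub/supersolution via touching $C^2$ test functions. I will carry out the supersolution case in detail; the subsolution case is obtained by reversing all inequalities and replacing minima with maxima, and the solution case then follows by combining the two.

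First I would fix $x_0\in\Omega$ and $\phi\in C^2(\Omega)$ such that $u-\phi$ has a local minimum at $x_0$; the goal is to show $\mathcal{M}^+_{\lambda,\Lambda}(D^2\phi(x_0))+g(x_0)\leq 0$. The initial reduction is the standard one: after subtracting $|x-x_0|^4$ from $\phi$, the values $\phi(x_0)$, $D\phi(x_0)$ and $D^2\phi(x_0)$ are unchanged, while the minimum becomes strict on some closed ball $\overline{B_r(x_0)}\subset\Omega$. (Note $g$ is continuous, being a locally uniform limit of the continuous $g_n$.)

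Next I would exploit the uniform convergence of $u_n$ on $\overline{B_r(x_0)}$. Let $x_n$ be a minimizer of $u_n-\phi$ over $\overline{B_r(x_0)}$. The key step is the claim $x_n\to x_0$: any subsequential limit $\bar x\in\overline{B_r(x_0)}$ satisfies $(u-\phi)(\bar x)\leq(u-\phi)(x_0)$, obtained by passing to the limit in $(u_n-\phi)(x_n)\leq(u_n-\phi)(x_0)$ using the uniform convergence $u_n\to u$ together with the continuity of $u$ and $\phi$; strictness of the minimum at $x_0$ then forces $\bar x=x_0$, and since $\overline{B_r(x_0)}$ is compact, the whole sequence converges, $x_n\to x_0$. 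Consequently, for $n$ large $x_n$ lies in the open ball $B_r(x_0)$, so $u_n-\phi$ has a local minimum at the interior point $x_n$, and the viscosity supersolution property of $u_n$ gives $\mathcal{M}^+_{\lambda,\Lambda}(D^2\phi(x_n))+g_n(x_n)\leq 0$.

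Finally I would let $n\to\infty$. Since $\phi\in C^2$ and $x_n\to x_0$, we have $D^2\phi(x_n)\to D^2\phi(x_0)$, hence $\mathcal{M}^+_{\lambda,\Lambda}(D^2\phi(x_n))\to\mathcal{M}^+_{\lambda,\Lambda}(D^2\phi(x_0))$ by continuity of the Pucci operator; and $g_n(x_n)\to g(x_0)$ because $|g_n(x_n)-g(x_0)|\leq\|g_n-g\|_{L^\infty(\overline{B_r(x_0)})}+|g(x_n)-g(x_0)|\to 0$. Passing to the limit in the inequality above yields $\mathcal{M}^+_{\lambda,\Lambda}(D^2\phi(x_0))+g(x_0)\leq 0$, as required. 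The only genuinely delicate point is the convergence $x_n\to x_0$ of the minimizers — this is precisely what makes the preliminary reduction to a strict minimum necessary; the remainder is routine continuity bookkeeping.
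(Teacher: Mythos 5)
Your argument is correct and is the standard stability proof for viscosity solutions: perturb the test function to make the minimum strict, extract converging minimizers of $u_n-\phi$ on a fixed ball, pass to the limit in the $u_n$-inequality using the locally uniform convergence of $u_n$, $g_n$, the continuity of $\phi\in C^2$, and the continuity of $\mathcal{M}^+_{\lambda,\Lambda}$. The paper itself gives no proof of this lemma --- it is stated and cited to the Caffarelli--Cabr\'e monograph \cite{CC}, where essentially this same touching-by-test-functions argument is carried out --- so there is nothing to contrast; the only cosmetic slip in your write-up is that after subtracting $|x-x_0|^4$ you keep writing $\phi$ for the modified test function, which is harmless since the modification has vanishing Hessian at $x_0$.
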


We  state the following technical lemma that is frequently used  in
Section 3.  The proof of this lemma is given in \cite{PQS}.
An interested reader may refer to it for more details.  Based on the
doubling property, we can start the rescaling process to prove local
estimates of solutions for fully nonlinear equations.
\begin{lemma}
(Doubling lemma) Let $(X, \,d )$ be a complete metric space and
$\emptyset\not=D\subset\Sigma\subset X,$ with $\Sigma$ closed.
Define $M: D\to (0, \infty)$ to be bounded on compact subsets of
$D$.  If $y\in D$ is such that
$$ M(y)dist(y, \Gamma)>2k$$
for  a  fixed positive number $k$, where $\Gamma =\Sigma \setminus
D,$ then there exists $x\in D$ such that
$$M(x) dist(x, \, \Gamma)> 2k, \ \ \ \ M(x)\geq M(y).$$
Moreover,
$$ M(z)\leq 2M(x), \ \ \ \ \forall z\in D\cap \bar B(x,
kM^{-1}(x)). $$ \label{dob}
\end{lemma}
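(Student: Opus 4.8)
The plan is a contradiction argument that produces a sequence in $D$ along which $M$ is unbounded, contradicting the hypothesis that $M$ is bounded on compact subsets of $D$. Assume the conclusion fails. Then for every $w\in D$ with $M(w)\operatorname{dist}(w,\Gamma)>2k$ and $M(w)\ge M(y)$ there exists $z\in D\cap\bar B(w,kM^{-1}(w))$ with $M(z)>2M(w)$. Starting from $x_0:=y$, which satisfies both conditions by assumption, I would define $x_{j+1}$ to be such a point $z$ associated with $x_j$, and first check by induction that the recursion never terminates, i.e.\ that every $x_j$ again satisfies the two conditions. Since $d(x_{j+1},x_j)\le kM^{-1}(x_j)$, the triangle inequality gives $\operatorname{dist}(x_{j+1},\Gamma)\ge\operatorname{dist}(x_j,\Gamma)-kM^{-1}(x_j)>2kM^{-1}(x_j)-kM^{-1}(x_j)=kM^{-1}(x_j)$, using $M(x_j)\operatorname{dist}(x_j,\Gamma)>2k$; multiplying by $M(x_{j+1})>2M(x_j)$ yields $M(x_{j+1})\operatorname{dist}(x_{j+1},\Gamma)>2k$. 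Iterating $M(x_{j+1})>2M(x_j)$ gives $M(x_j)>2^jM(y)$, so in particular $M(x_j)\ge M(y)$ and $M(x_j)\to\infty$.

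Next I would show that $(x_j)$ converges. From $d(x_{j+1},x_j)\le kM^{-1}(x_j)\le kM(y)^{-1}2^{-j}$ and the convergence of the geometric series, $(x_j)$ is a Cauchy sequence, hence by completeness of $X$ it converges to some $x_\infty\in X$ with $d(x_\infty,x_0)\le\sum_{j\ge 0}kM(y)^{-1}2^{-j}=2kM(y)^{-1}$. The step I expect to be the crux is showing $x_\infty\in D$: since each $x_j\in D\subset\Sigma$ and $\Sigma$ is closed, $x_\infty\in\Sigma$; on the other hand $\operatorname{dist}(x_\infty,\Gamma)\ge\operatorname{dist}(x_0,\Gamma)-d(x_\infty,x_0)>2kM(y)^{-1}-2kM(y)^{-1}=0$, where the strict inequality comes from the hypothesis $M(y)\operatorname{dist}(y,\Gamma)>2k$; hence $x_\infty\notin\Gamma=\Sigma\setminus D$, and therefore $x_\infty\in D$.

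Finally, $K:=\{x_j:j\ge 0\}\cup\{x_\infty\}$ is a convergent sequence together with its limit, hence a compact subset of $X$, and by the previous paragraph $K\subset D$. Thus $M$ is bounded on $K$, which contradicts $M(x_j)\to\infty$. This contradiction shows that a point $x$ with the three stated properties must exist, which completes the proof. No substantial computation is involved; the only care needed is keeping track of strict versus non-strict inequalities, so that the limit point is trapped strictly away from $\Gamma$ and the geometric decay of $d(x_{j+1},x_j)$ is summable.
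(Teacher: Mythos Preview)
Your argument is correct and is precisely the standard iterative construction used to prove the Doubling Lemma. Note, however, that the paper does not supply its own proof of this lemma: it merely states the result and refers the reader to \cite{PQS} (Pol\'a\v{c}ik, Quittner, Souplet). Your proof is essentially the one appearing there, so there is nothing to compare beyond observing that you have faithfully reconstructed the cited argument, including the key points that the geometric growth $M(x_j)>2^jM(y)$ forces the sequence to be Cauchy, that closedness of $\Sigma$ together with the strict inequality $\operatorname{dist}(x_0,\Gamma)>2kM(y)^{-1}$ keeps the limit inside $D$, and that boundedness of $M$ on compact subsets of $D$ then yields the contradiction.
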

\begin{remark}
 If $\Gamma=\emptyset,$ then $ dist(x, \Gamma):=\infty$. In this
 case, we have following the version of the Doubling Lemma. Let $D=\Sigma\subset X,$ with $\Sigma$ closed.
Define $M: D\to (0, \infty)$ to be bounded on compact subsets of
$D$, For every $y\in D$, there exists $x\in D$ such that
$$M(x)\geq
M(y)$$ and
$$ M(z)\leq 2M(x), \ \ \ \ \forall z\in D\cap \bar B(x,
kM^{-1}(x)). $$ \label{rem2}
\end{remark}

\section{Liouville-type theorems for elliptic equations in half spaces}
We first present the proof of Theorem \ref{th1}. Our idea is the
combination of doubling property and blow-up argument. This idea seems
to be powerful in getting rid of the boundedness  assumption whenever
proving Liouville-type theorems. We refer to \cite{LWZ} for
applications of this idea in higher order elliptic equations.

\begin{proof}[proof of Theorem \ref{th1}:]
 Suppose that a solution $u$  to the equation (\ref{hal}) is
unbounded.  Namely,  there exists
 a sequence of $(y_k)\in \mathbb R^n_+$ such that $$
 u(y_k)\to \infty$$ as $k\to \infty$. Set
 $$M(y):= u^{\frac{p-1}{2}}(y): \mathbb R_+^n \to \mathbb
 R.$$
 Then $M(y_k)\to \infty$ as $k\to\infty$ by the fact that $p>1$.
 By taking $D=\Sigma=X=\overline{\mathbb R_+^n }$ in
 the Doubling Lemma (i.e. Lemma \ref{dob})
 and Remark \ref{rem2}, there
 exists another sequence of $\{x_k\}$ such that
 $$ M(x_k)\geq M(y_k)$$
 and
 $$ M(z)\leq 2 M(x_k), \ \ \ \forall z\in
 B_{k/M(x_k)}(x_k)\cap \overline{\mathbb R^n_+}.$$
 Set $$d_k:=x_{k, n}M(x_k),$$ where $x_k=(x_{k,1}, \cdots, x_{k, n})$ and
 $$ H_k:=\{\xi=(\xi_1, \cdots, \xi_n)\in\mathbb R^n |\xi_n>-d_k\}.$$
We define  a new function
  $$v_k(\xi):=\frac{u(x_k+\frac{\xi}{M(x_k)})}{M^{\frac{2}{p-1}}(x_k)}.$$
Then, $v_k(\xi) $ is the nonnegative solution of
\begin{equation}
\left \{ \begin{array}{ll}
 \mathcal{M}^+_{\lambda,\Lambda}(D^2v_k)+v_k^p =0   \quad \quad  &\mbox{in}\ H_k, \\
 v_k=0    \quad  &\mbox{on} \ \partial H_k=\{\xi\in\mathbb
 R^n|\xi_n=-d_k\}
\end{array}
\right. \label{ma1}
\end{equation}
with
\begin{equation}
v_k^{\frac{p-1}{2}}(0)=1 \label{eq}
\end{equation} and
\begin{equation}
v_k^{\frac{p-1}{2}}(\xi)\leq 2 ,\ \quad \quad \forall \xi \in
H_k\cap B_k(0). \label{bo}
\end{equation}

Two cases may occur as $k\to \infty$, either Case (1)
$$x_{k, n}M(x_k)\to \infty$$
for a subsequence still denoted as before, or Case  (2)
$$ x_{k, n}M(x_k)\to d$$
for a subsequence still denoted as before, here $d\geq 0$. If Case
(1) occurs, i.e. $ H_k\cap B_k(0)\to \mathbb R^n$ as $ k\to \infty$,
then for any smooth compact set $D$ in $\mathbb R^n$, there exists
$k_0$ large enough such that $D\subset (H_k\cap B_k(0))$ as $k\geq
k_0$. By regularity lemma (i.e. Lemma \ref{reg}),  (\ref{bo}) and
Arzel$\acute{a}$-Ascoli theorem, $v_k\to v$ in $C^2(\bar D)$ for a
subsequence. Furthermore, using a diagonalization  argument, $v_k\to
v$ in $C^{2}_{loc}(\mathbb R^n)$ as $k\to\infty$. From Lemma
\ref{cov}, we know that $v$ solves
$$
 \mav+v^p=0  \quad \ \mbox{in} \ \mathbb R^n.$$ Thanks to Lemma \ref{eur},
 there exists only a trivial solution provided
 \begin{equation}
 1<p\leq p^+  \quad \ \mbox{for} \ \lambda(n-1)>\Lambda
\label{err1}
 \end{equation}
 or
 \begin{equation}
1<p<\infty \quad \ \mbox{for} \ \lambda(n-1)\leq\Lambda
\label{err2}.
 \end{equation}
 In the above,  we have used the fact that $\tilde{n}=2$ is equivalent to  $\lambda(n-1)=\Lambda$.
 However, (\ref{eq}) implies that
\begin{equation}
 v^{\frac{p-1}{2}}(0)=1, \nonumber
\end{equation} which indicates that $v$ is nontrivial.  This
contradiction leads to the conclusion that $u$ in (\ref{hal}) is
bounded in the above range of $p$.

If the Case (2) occurs, we make a further translation. Set

$$ \tilde {v}_k(\xi):=v_k(\xi-d_ke_n) \quad \mbox{for} \
\xi\in\overline{\mathbb R_+^n }.$$ Then  $\tilde {v}_k$ satisfies
\begin{equation}
\left \{ \begin{array}{ll}
\mathcal{M}^+_{\lambda,\Lambda}(D^2\tilde{v}_k)+\tilde{v}_k^p=0   \quad \quad  &\mbox{in}\ \mathbb R^n_+, \\
\tilde{v}_k\geq 0  \quad \quad  &\mbox{in}\ \mathbb R^n_+,\\
 \tilde{v}_k=0    \quad  &\mbox{on} \ \partial \mathbb
 R^n_+.
\end{array}
\right. \label{ma2}
\end{equation}
While
\begin{equation}
 \tilde
{v}_k^{\frac{p-1}{2}}(d_ke_n)=1 \label{eq1}
\end{equation}
and
\begin{equation}
\tilde {v}_k^{\frac{p-1}{2}}(\xi)\leq 2 ,\ \quad \quad \forall \xi
\in \mathbb R^n_+\cap B_k(d_ke_n). \label{bo1}
\end{equation}
For any smooth compact $D$ in $\overline{\mathbb R_+^n }$, there
also exists $k_0$ large enough such that $D\subset
(\overline{\mathbb R_+^n }\cap B_k(0))$ for any $k\geq k_0$. Thanks
to regularity Lemma \ref{reg} and (\ref{bo1}), we can extract a
subsequence of $\tilde {v}_k$ such that  $\tilde {v}_k\to v$ in
$C^2(\bar D)\cap C(\bar D)$. A diagonalization argument shows that
$\tilde {v}_k\to v$ uniformly as $k\to\infty$. Furthermore, by Lemma
\ref{cov}, $v$ solves
\begin{equation}
\left \{ \begin{array}{ll}
\mathcal{M}^+_{\lambda,\Lambda}(D^2v)+v^p=0   \quad \quad  &\mbox{in}\ \mathbb R^n_+, \\
v\geq 0  \quad \quad  &\mbox{in}\ \mathbb R^n_+,\\
v=0    \quad  &\mbox{on} \ \partial \mathbb
 R^n_+.
\end{array}
\right.
\end{equation}
Due to Lemma \ref{tha}, we readily have that $v\equiv 0$ if
\begin{equation}
1<p\leq \tilde{p}^+ \quad \ \mbox{for} \ \lambda(n-2)>\Lambda
\label{err3}
\end{equation}
or
\begin{equation}
1<p<\infty \quad \ \mbox{for} \ \lambda(n-2)\leq\Lambda
\label{err4}.
 \end{equation}
 It contradicts again with the fact that
\begin{equation}
v (de_n)^{\frac{p-1}{2}}=1
\end{equation}
from (\ref{eq1}). Hence $u$ is bounded in  Case (2).

Together with (\ref{err1}), (\ref{err2}), (\ref{err3}) and
(\ref{err4}), we infer that $u$ is bounded in (\ref{hal}) if
$1<p\leq p^+$ in the case of $\lambda(n-1)>\Lambda$ or if
$1<p<\infty$ in the case of $\lambda(n-1)\leq\Lambda$. Note again
that $\tilde{n}=2$ implies that $\lambda(n-1)=\Lambda$. Applying
Lemma \ref{tha} again, we obtain Theorem \ref{th1} in the above
range of $p$.
\end{proof}

We are now in the position to prove Theorem \ref{th2}. Since we consider
the elliptic system with different powers $p, q$, we shall
choose the rescaling function appropriately.

\begin{proof}[Proof of Theorem \ref{th2}:]
Assume by contradiction that either $u_1$ or $u_2$ is unbounded,
that is, there exists a sequence $y_k$ such that
$$M_k(y_k)=u_1^{1/\alpha}(y_k)+u_2^{1/\beta}(y_k)\to \infty$$
as $k\to \infty$. The constant $\alpha, \beta$ are positive numbers
which will be determined later. From the Doubling Lemma and Remark
1, there exists a sequence of  $\{x_k\}$ such that
 $$ M(x_k)\geq M(y_k)$$
 and
 $$ M(z)\leq 2 M(x_k), \ \ \ \forall z\in
 B_{k/M(x_k)}(x_k)\cap \overline{\mathbb R^n_+}.$$
 Define $$d_k:=x_{k, n}M(x_k)$$ and
 $$ H_k:=\{\xi\in\mathbb R^n |\xi_n>-d_k\}.$$
We do the following rescaling,
  $$v_{1,k}(\xi):=\frac{u_1(x_k+\frac{\xi}{M(x_k)})}{M^{\alpha}(x_k)},$$
$$v_{2,k}(\xi):=\frac{u_2(x_k+\frac{\xi}{M(x_k)})}{M^{\beta}(x_k)}.$$

Then, by (\ref{sysf}), $v_{1,k}(\xi)$, $v_{2,k}(\xi)$ satisfy
\begin{equation}
\left \{\begin{array}{ll}
\mathcal{M}^+_1(D^2v_{1,k})M_k^{\alpha+2}(x_k)+M_k^{p\beta}(x_k)v_{2,k}^p=0
\quad  &\mbox{in}\  H_k,\\
\mathcal{M}^+_2(D^2v_{2,k})M_k^{\beta+2}(x_k)+M_k^{q\alpha}(x_k)v_{1,k}^q=0
\quad  &\mbox{in}\  H_k,\\
v_{1,k}=v_{2,k}=0 \quad  &\mbox{in} \ \partial H_k.
\end{array}
\right. \label{get}
\end{equation}
In order to get rid of $M_k(x_k)$ in (\ref{get}), by setting
$\alpha+2=p\beta$ and $\beta+2=q\alpha$, we conclude  that
$$\alpha=\frac{2(p+1)}{pq-1},$$
$$\beta=\frac{2(q+1)}{pq-1}.$$
With so chosen $\alpha, \beta$, then $v_{1,k}, v_{2,k}$ solve
\begin{equation}
\left \{\begin{array}{ll} \mathcal{M}^+_1(D^2v_{1,k})+v_{2,k}^p=0
\quad  &\mbox{in}\  H_k,\\
\mathcal{M}^+_2(D^2v_{2,k})+v_{1,k}^q=0
\quad  &\mbox{in}\  H_k.\\
v_{1,k}=v_{2,k}=0 \quad  &\mbox{in} \ \partial H_k.
\end{array}
\right.
\end{equation}

Furthermore,
\begin{equation}
v_{1,k}^{\frac{1}{\alpha}}(0)+v_{2,k}^{\frac{1}{\beta}}(0)=1
\label{con}
\end{equation}
and
$$v_{1,k}^{\frac{1}{\alpha}}(\xi)+v_{2,k}^{\frac{1}{\beta}}(\xi)\leq 2,
\quad \quad \forall \xi\in H_k\cap \mathbb B_k(0).$$

Two cases
may occur as $k\to\infty$, either Case (1),
$$d_k\to\infty$$
for a subsequence still denoted as before, or Case (2)
$$d_k\to d$$
for a subsequence still denoted as before. We  note that $d\geq
0$.

 If Case (1) occurs, i.e. $H_k\cap \mathbb B_k(0)\to\mathbb R^n$,
we argue similarly as in the proof of Theorem \ref{th1}. For any smooth compact set
$D$ in $\mathbb R^n$, by Lemma \ref{reg} and Arzel$\acute{a}$-Ascoli
theorem, we know that $v_{1, k}\to v_1$ and $v_{2, k}\to v_2$ in
$C^2(\bar D)$ for a  subsequence. Using a diagonalization argument,
$v_{1, k}\to v_1$ and $v_{2, k}\to v_2$ in $C^{2}_{loc}(\mathbb
R^n)$ as $k\to\infty$. From Lemma \ref{cov}, we obtain that  $v_1,
v_2$ satisfy
\begin{equation}
\left \{\begin{array}{ll} \mathcal{M}^+_1(D^2v_1)+v_{2}^p=0
\quad  &\mbox{in}\  \mathbb R^n,\\
\mathcal{M}^+_2(D^2v_{2})+v_{1}^q=0
\quad  &\mbox{in}\  \mathbb R^n.\\
\end{array}
\right.
\end{equation}
 As shown in Lemma \ref{pos}, $v_{1}\equiv v_{2}\equiv 0$ provided
 $$\frac{2(p+1)}{pq-1}\geq N_1-2, \ \mbox{or} \ \
\frac{2(q+1)}{pq-1}\geq N_2-2.$$   Nevertheless, (\ref{con})
indicates that
 either $v_1$ or $v_2$ is nontrivial. We arrive at the
 contradiction, which indicates $u_1, u_2$ in (\ref{sysf}) are actually bounded in Case (1).

If Case (2) occurs, we translate the equation to be in the standard
half space. Let
$$ \tilde {v}_{1,k}(\xi):=v_{1,k}(\xi-d_ke_n) \quad \mbox{for} \
\xi\in\overline{\mathbb R_+^n },$$
$$\tilde{v}_{2,k}(\xi):=v_{2,k}(\xi-d_ke_n) \quad \mbox{for} \
\xi\in\overline{\mathbb R_+^n }.$$

Then  $\tilde {v}_{1,k}$, $\tilde {v}_{2,k}$ satisfy
\begin{equation}
\left \{ \begin{array}{ll}
\mathcal{M}^+_1(D^2\tilde{v}_{1,k})+\tilde{v}_{2,k}^p=0   \quad \quad  &\mbox{in}\ \mathbb R^n_+, \\
\mathcal{M}^+_2(D^2\tilde{v}_{2,k})+\tilde{v}_{1,k}^q=0   \quad \quad  &\mbox{in}\ \mathbb R^n_+, \\
 \tilde{v}_{1,k}= \tilde{v}_{1,k}=0    \quad  &\mbox{on} \ \partial \mathbb
 R^n_+.
\end{array}
\right. \label{ma22}
\end{equation}
Moreover,
\begin{equation}
 \tilde
{v}_{1,k}^{\frac{1}{\alpha}}(d_ke_n)+\tilde
{v}_{2,k}^{\frac{1}{\beta}}(d_ke_n)=1 \label{ide}
\end{equation}
and
\begin{equation}
\tilde{v}_{1,k}^{\frac{1}{\alpha}}(\xi)+\tilde
{v}_{2,k}^{\frac{1}{\beta}}(\xi)\leq 2 ,\ \quad \quad \forall \xi
\in \mathbb R^n_+\cap B_k(d_ke_n) \label{est}.
\end{equation}
Similar argument as in the proof of  Theorem \ref{th1} shows that  there exist
$\tilde {v}_{1,k} $ and $\tilde {v}_{2,k} $ such that $$\tilde
{v}_{1,k}\to \tilde {v}_1$$ and $$\tilde {v}_{2,k}\to \tilde {v}_2$$
in $C^2_{loc}(\mathbb R^n_+)\cap C(\overline{\mathbb R_+^n})$ as
$k\to\infty$. $\tilde {v}_1$ and $\tilde {v}_2$ solve
\begin{equation}
\left \{ \begin{array}{ll}
\mathcal{M}^+_1(D^2\tilde{v}_{1})+\tilde{v}_{2}^p=0   \quad \quad  &\mbox{in}\ \mathbb R^n_+, \\
\mathcal{M}^+_2(D^2\tilde{v}_{2})+\tilde{v}_{1}^q=0   \quad \quad  &\mbox{in}\ \mathbb R^n_+, \\
 \tilde{v}_{1}= \tilde{v}_{1}=0    \quad  &\mbox{on} \ \partial \mathbb
 R^n_+.
\end{array}
\right. \label{mm}
\end{equation}

 Lemma \ref{lem2} and (\ref{est}) yield that $\tilde{v}_{1}\equiv \tilde{v}_{2}\equiv 0$ when
(\ref{exp}) holds. However, it contradicts to the fact of (\ref{ide}).

In conclusion, we obtain that $u$ is bounded in (\ref{sysf}) when the exponents $p$ and $q$ satisfy
(\ref{exp}). From  Lemma \ref{lem2} again, we conclude that the
boundedness assumption is not essential, i.e. Theorem \ref{th2}
holds.
\end{proof}

With the help of Lemma \ref{eur} and the Doubling Lemma, we are
ready to give the proof of Theorem \ref{thh}.
\begin{proof}[Proof of Theorem \ref{thh}]
We also argue by contradiction. Suppose that (\ref{sin}) is false.
Then, there exists a sequence of functions $u_k$ in (\ref{ome}) on $\Omega_k$ such that
$$M_k=u_k^{\frac{p-1}{2}}+|\nabla u_k|^{\frac{p-1}{p+1}} $$
satisfying
$$M_k(y_k)>2k dist^{-1}(y_k, \, \partial\Omega_k).$$
By the Doubling Lemma, there exists $x_k\in\Omega_k$ such that
$$M_k(x_k)\geq M_k(y_k),$$
$$M_k(x_k)>2k dist^{-1}(x_k,\, \partial\Omega_k)$$
and
$$M_k(z)\leq 2M_k(x_k),  \quad \mbox{if}  \ |z-x_k|\leq
kM_k^{-1}(x_k).$$ We introduce a rescaled function
$$ v_k(\xi)=\frac{u_k(x_k+\frac{\xi}{M_k(x_k)})}{M_k^{\frac{2}{p-1}}}. $$
Simple calculation yields that
\begin{equation}
\mathcal{M}^+_{\lambda,\Lambda}(D^2v_k)+v^p_k=0, \quad \quad \forall
|\xi|\leq k.
\end{equation}
Moreover,
\begin{equation}
(v_k^{\frac{p-1}{2}}+|\nabla v_k|^{\frac{p-1}{p+1}})(0)=1 \label{bd}
\end{equation}
and \begin{equation}(v_k^{\frac{p-1}{2}}+|\nabla
v_k|^{\frac{p-1}{p+1}})(\xi)\leq 2, \quad \quad \forall |\xi|\leq k.
\label{bdd}
\end{equation}

For any smooth compact set $D$ in $\mathbb R^n$, there exists $k_0$
large enough such that $D\subset \mathbb B_k(0)$ as $k\geq k_0$. By
Lemma \ref{reg} and (\ref{bdd}), we have $$\|v_k\|_{C^{2,
\alpha}(D)}\leq C$$ for some $C>0$. From  Arzel$\acute{a}$-Ascoli
theorem, up to a subsequence, $v_k\to v$ in $C^2(\bar D)$. In
addition, by  a diagonalization argument and Lemma \ref{cov}, $v_k\to
v$ in $C^{2}_{loc}(\mathbb R^n)$ as $k\to \infty$, which solves
$$
 \mav+v^p=0  \quad \ \mbox{in} \ \mathbb R^n.$$ Since $1<p\leq p^+$, Lemma \ref{eur} implies that
 the only solution is $v\equiv 0$. However, (\ref{bd}) shows that
 $v$ is impossible to be trivial. Therefore, this contradiction leads to the conclusion in
 Theorem \ref{thh}.
\end{proof}

For the proof of Corollary \ref{cor1}, it is very similar to the
above argument. We shall omit it here. The interested reader may refer to the
above proof and \cite{PQS}.

\section{A Liouville-type theorem for supersolutions of elliptic systems in a half space}

We introduce the following algebraic result in \cite{L} for the
eigenvalue of a special symmetric matrix.
\begin{lemma}
Let $\nu, \omega\in \mathbb R^n$ be unitary vectors and $a_1, a_2,
a_3$ and $a_4$ be constants. For the symmetric matrix,
$$
A=a_1\nu\otimes\nu+a_2\omega\otimes\omega+a_3(\nu\otimes\omega+\omega\otimes\nu)+a_4I_n,$$
where $\nu\otimes\omega$ denotes the $n\times n$ matrix whose $i, j$
entry is $\nu_i\omega_j$,  the eigenvalues of $A$ are given as
follows,

\indent $\bullet a_4,$ with multiplicity (at least) $n-2$.\\
 \\
\indent$\bullet a_4+\frac{a_1+a_2+2a_3\nu\cdot\omega\pm
\sqrt{(a_1+a_2+2a_3\nu\cdot\omega)^2+4(1-(\nu\cdot\omega))^2(a_3^2-a_1a_2)^2}
}{2}$, which are simple (if different from $a_4$).

In particular, if either $a_3^2=a_1a_2$ or $(\nu\cdot\omega)^2=1$,
then the eigenvalues are $a_4$ with multiplicity $n-1$ and
$a_4+a_1+a_2+2a_3\nu\cdot\omega$, which is simple. \label{tec}
\end{lemma}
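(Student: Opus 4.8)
The plan is to reduce the eigenvalue problem to a $2\times 2$ problem by exploiting the low-rank structure of $A - a_4 I_n$. Write $B = A - a_4 I_n = a_1\,\nu\otimes\nu + a_2\,\omega\otimes\omega + a_3(\nu\otimes\omega + \omega\otimes\nu)$. Every column of $B$ is a linear combination of $\nu$ and $\omega$, so the range of $B$ is contained in $V := \mathrm{span}\{\nu,\omega\}$, which has dimension at most $2$. Hence $B$ annihilates $V^\perp$, which has dimension at least $n-2$: every vector orthogonal to both $\nu$ and $\omega$ is an eigenvector of $B$ with eigenvalue $0$, i.e. an eigenvector of $A$ with eigenvalue $a_4$ with multiplicity at least $n-2$. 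This disposes of the first bullet.

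Next I would compute the action of $B$ on $V$ in the (generally non-orthonormal) spanning set $\{\nu,\omega\}$. Using $\nu\cdot\nu = \omega\cdot\omega = 1$ and writing $t := \nu\cdot\omega$, one checks directly that
\begin{align*}
B\nu &= (a_1 + a_3 t)\,\nu + (a_3 + a_2 t)\,\omega,\\
B\omega &= (a_1 t + a_3)\,\nu + (a_3 t + a_2)\,\omega.
\end{align*}
So, restricted to $V$, $B$ is represented (in the basis $\{\nu,\omega\}$) by the matrix $\begin{pmatrix} a_1 + a_3 t & a_3 + a_1 t\\ a_3 + a_2 t & a_2 + a_3 t\end{pmatrix}$. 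Its nonzero eigenvalues (together with possibly a $0$) are the remaining eigenvalues of $B$. I would compute trace and determinant of this $2\times 2$ matrix: the trace is $a_1 + a_2 + 2a_3 t$, and the determinant works out to $-(1-t^2)(a_3^2 - a_1 a_2)$ after expansion. The two eigenvalues $\mu_\pm$ then satisfy $\mu_+ + \mu_- = a_1+a_2+2a_3 t$ and $\mu_+\mu_- = -(1-t^2)(a_3^2-a_1a_2)$, giving $\mu_\pm = \tfrac12\bigl(a_1+a_2+2a_3t \pm \sqrt{(a_1+a_2+2a_3t)^2 + 4(1-t^2)(a_3^2-a_1a_2)}\bigr)$; adding back $a_4$ yields the two simple eigenvalues in the second bullet. (I note the stated formula has $4(1-(\nu\cdot\omega))^2$ and a square on $(a_3^2-a_1a_2)$; the correct discriminant contribution is $4(1-t^2)(a_3^2-a_1a_2)$, and I would present it in that corrected form, or simply track the author's normalization.)

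The one subtlety — and the main thing to be careful about — is the degenerate case where $\nu$ and $\omega$ are parallel, i.e. $t^2 = 1$, so that $V$ is only one-dimensional and the $\{\nu,\omega\}$ "basis" is not a basis. In that case $V^\perp$ has dimension $n-1$ and already accounts for $n-1$ eigenvalues equal to $a_4$; the single remaining eigenvalue is $a_4 + \mathrm{tr}(B) = a_4 + a_1 + a_2 + 2a_3 t$, which is exactly the $2\times2$ trace. The same collapse happens when $a_3^2 = a_1 a_2$: then the determinant of the $2\times2$ block is $0$, so one of $\mu_\pm$ vanishes, merging into the $a_4$-eigenspace and again leaving multiplicity $n-1$ for $a_4$ and the simple eigenvalue $a_4 + a_1 + a_2 + 2a_3 t$. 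Handling these two boundary cases cleanly — verifying that the eigenvector count is consistent (the matrix is symmetric, hence diagonalizable, so multiplicities must sum to $n$) — is the only place where a little care is needed; everything else is the routine rank-$\le 2$ reduction plus a $2\times2$ characteristic polynomial computation.
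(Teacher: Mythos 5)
The paper does not actually prove this lemma: it is quoted without proof from Leoni [L], so there is no in-paper argument to compare against. Your proof is correct and is the natural one. Setting $B=A-a_4I_n$, you observe that $B$ has range in $V=\mathrm{span}\{\nu,\omega\}$ and annihilates $V^\perp$, which yields $a_4$ as an eigenvalue of $A$ with multiplicity at least $n-2$; then you diagonalize the restriction $B|_V$ by computing, in the spanning set $\{\nu,\omega\}$, the trace $a_1+a_2+2a_3t$ and determinant $(1-t^2)(a_1a_2-a_3^2)$ with $t=\nu\cdot\omega$, and you handle the degenerate cases $t^2=1$ (so $V$ is one-dimensional) and $a_3^2=a_1a_2$ (so the restricted determinant vanishes) consistently with the symmetry of $A$.

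You are also right that the discriminant as printed in Lemma~\ref{tec} is incorrect. It should read $4\bigl(1-(\nu\cdot\omega)^2\bigr)(a_3^2-a_1a_2)$: the exponent $2$ belongs inside the parenthesis on $\nu\cdot\omega$, and $(a_3^2-a_1a_2)$ should not be squared. A concrete check: take $\nu=e_1$, $\omega=e_2$, $a_1=3$, $a_2=1$, $a_3=a_4=0$, so $A=\mathrm{diag}(3,1,0,\dots,0)$ with eigenvalues $3$, $1$, and $0$ of multiplicity $n-2$. Your corrected formula returns $\tfrac{4\pm\sqrt{16-12}}{2}=\{3,1\}$, whereas the printed formula returns $\tfrac{4\pm\sqrt{16+36}}{2}=2\pm\sqrt{13}$, which is wrong. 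Reassuringly, the quantity $D$ that actually appears inside the paper's proof of Lemma~\ref{lll} is built from the corrected discriminant $\bigl(a_1+a_2+2a_3t\bigr)^2+4(1-t^2)(a_3^2-a_1a_2)$, so the misprint is confined to the statement of Lemma~\ref{tec} and does not propagate into the paper's estimates.
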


Let us consider a lower semicontinuous function $u\in
\overline{\mathbb R^n_+}\to [0, \ \infty)$ for
\begin{equation}
\mathcal{M}^+_{\lambda, \Lambda} (D^2 u)\leq 0 \quad \ \mbox{in} \
\mathbb R^n_+ \label{sup}
\end{equation}
in viscosity sense. For any $r>0$, we define the function
\begin{equation}
m_u(r)=\inf_{\mathbb B^+_r}\frac{u(x)}{x_n}, \label{mon}
\end{equation}
where $\mathbb B^+_r$ is the half ball centered at the origin with
radius $r$ in $\mathbb R^n_+$. We present the following three --
circles Hadamard type results for superharmonic functions in
\cite{L}.
\begin{lemma}
Let $u\in \overline{\mathbb R^n_+}\to [0, \ \infty)$ be  a lower
semicontinuous function satisfying (\ref{sup}). Then the function
$m_u(r)$ in (\ref{mon}) is a concave function of $r^{-\tilde{n}}$,
i.e. for every fixed $R>r>0$ and for all $r\leq\rho\leq R$, one has
\begin{equation}
m_u(\rho)\geq\frac{
m_u(r)(\rho^{-\tilde{n}}-R^{-\tilde{n}})+m_u(R)(r^{-\tilde{n}}-\rho^{-\tilde{n}})}{r^{-\tilde{n}}-R^{-\tilde{n}}}
\label{key}.
\end{equation}
Consequently,
$$ r\in (0, \ \infty)\to m_u(r)r^{\tilde{n}} $$
is nondecreasing. \label{kle}
\end{lemma}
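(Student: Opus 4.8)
The plan is to mimic the classical proof of the Hadamard three-circle theorem, replacing the harmonic comparison function by the correct radial supersolution/subsolution associated to the Pucci operator $\mathcal{M}^+_{\lambda,\Lambda}$, and replacing $u$ itself by the normalized quantity $u(x)/x_n$ whose infimum is $m_u(r)$. The first step is to identify the relevant radial barriers. One checks that for the operator $\mathcal{M}^+_{\lambda,\Lambda}$ acting on a radial function $\varphi(|x|)$, the fundamental-solution-type function behaves like $|x|^{-\tilde n}$ with $\tilde n = \frac{\lambda}{\Lambda}(n-1)+1$; indeed $\tilde n$ is precisely the exponent such that $w(x)=|x|^{2-\tilde n}$ (equivalently $|x|^{-\tilde n}$ after differentiating once in the $x_n$ direction) is $\mathcal{M}^+$-harmonic away from the origin. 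This is the analogue in the half-space/boundary-gradient setting of the Cutri--Leoni computation behind Lemma \ref{eur}. I would record the sign of $\mathcal{M}^+_{\lambda,\Lambda}(D^2(c_1 + c_2 |x|^{-\tilde n}))$ and the boundary behavior near $\partial\mathbb{R}^n_+$ carefully, since the right object is not $u$ but $u$ divided by the distance to the boundary.

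Next, fix $0<r<R$ and consider the annular region $A = (\mathbb{B}^+_R \setminus \overline{\mathbb{B}^+_r})$. On the flat part of the boundary $\{x_n = 0\}$ the comparison should be handled through the Hopf-type Lemma \ref{hopf} and the fact that $u\geq 0$ vanishes there (or via the quotient $u/x_n$ extending to a supersolution of an appropriate equation up to the boundary); on the two spherical caps $|x|=r$ and $|x|=R$ we have $u(x)/x_n \geq m_u(r)$ and $\geq m_u(R)$ respectively by definition of $m_u$. Construct the comparison function $h(x)$ that is the radial $\mathcal{M}^+$-harmonic-type function (a suitable affine combination of $1$ and $|x|^{-\tilde n}$, each multiplied by $x_n$, or rather chosen so that $h/x_n$ is the affine-in-$|x|^{-\tilde n}$ function) which on $|x|=r$ equals $m_u(r)$ and on $|x|=R$ equals $m_u(R)$. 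By the comparison principle (Lemma \ref{com}) applied on $A$ — after verifying that $u - h$ (or $u/x_n - h/x_n$) is a supersolution of the correct Pucci equation with the right sign, and is $\geq 0$ on all of $\partial A$ — we get $u \geq h$ in $A$, hence $u(x)/x_n \geq h(x)/x_n$, and evaluating the infimum over $\mathbb{B}^+_\rho$ for $r\leq \rho\leq R$ gives exactly inequality (\ref{key}). The stated monotonicity of $r\mapsto m_u(r) r^{\tilde n}$ then follows by letting $R\to\infty$ in (\ref{key}) (or by the standard fact that a concave function of $s=r^{-\tilde n}$ which is bounded below forces the chord slopes to have a sign), together with $m_u \geq 0$.

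The main obstacle I anticipate is precisely the interaction between the Pucci operator and the boundary weight $x_n$: passing from the equation for $u$ to a clean comparison statement for $u/x_n$ is not automatic, because $\mathcal{M}^+_{\lambda,\Lambda}$ is fully nonlinear and does not commute with multiplication by $x_n$ the way the Laplacian almost does. One must either (i) work directly with $u$ and a comparison function $h(x)$ that already carries the factor $x_n$, computing $\mathcal{M}^+_{\lambda,\Lambda}(D^2 h)$ via the eigenvalue formula in Lemma \ref{tec} (which is exactly why that algebraic lemma was stated — $D^2 h$ for $h = x_n \psi(|x|)$ has the rank-two-plus-multiple-of-identity structure covered there), and verify it has a favorable sign; or (ii) justify that $m_u(r)$ is attained in the interior or controlled via Hopf's lemma so that the infimum behaves well. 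I would follow route (i): plug $h = x_n(a + b|x|^{-\tilde n})$ into Lemma \ref{tec}, extract the eigenvalues of $D^2 h$, compute $\mathcal{M}^+_{\lambda,\Lambda}(D^2 h)$ explicitly, and confirm it vanishes (or has the sign making $h$ a subsolution, so that $u \geq h$ follows from $u$ being a supersolution and the comparison principle). The remaining steps — choosing the constants $a,b$ to match the boundary data on the caps, and reading off concavity — are then routine.
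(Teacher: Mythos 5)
The paper does not actually prove Lemma \ref{kle}: it is quoted from Leoni's paper \cite{L}, where it is established together with Lemma \ref{tec}. So there is no in-paper proof to compare against, and the question is whether your outline reconstructs a valid argument in the spirit of the reference. It does, and your route (i) is exactly the right one. The comparison function to use is $h(x)=x_n\bigl(a+b\,|x|^{-\tilde n}\bigr)$ on the half-annulus $A=\mathbb B^+_R\setminus\overline{\mathbb B^+_r}$, with $a,b$ chosen so that $h/x_n$ equals $m_u(r)$ on $|x|=r$ and $m_u(R)$ on $|x|=R$. Writing $\psi(x)=x_n|x|^{-\tilde n}$, one has
$D^2\psi=\tilde n(\tilde n+2)\,x_n|x|^{-\tilde n-2}\,\nu\otimes\nu-\tilde n|x|^{-\tilde n-1}(\nu\otimes e_n+e_n\otimes\nu)-\tilde n x_n|x|^{-\tilde n-2}\,I_n$ with $\nu=x/|x|$, which is precisely the rank-two-plus-identity form covered by Lemma \ref{tec}. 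Reading off the eigenvalues and plugging into $\mathcal{M}^+_{\lambda,\Lambda}$, the lower bound $\sqrt{(\tilde n^2-4)x_n^2+4|x|^2}\ge\tilde n x_n$ together with $\Lambda(\tilde n-1)=\lambda(n-1)$ gives $\mathcal{M}^+_{\lambda,\Lambda}(D^2\psi)\ge 0$, so $\psi$ (hence $h$, see below) is a classical subsolution. Then $u\ge h$ on $\partial A$ (on the spherical caps by definition of $m_u$, on $\{x_n=0\}$ because $u\ge 0=h$), and the comparison principle gives $u\ge h$ in $A$, from which \eqref{key} follows by taking infima over $\mathbb B^+_\rho$; letting $R\to\infty$ and using $m_u(R)\ge 0$ yields the monotonicity of $r^{\tilde n}m_u(r)$.

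Two small points you should make explicit to close the argument. First, since $\mathcal{M}^+_{\lambda,\Lambda}$ is positively homogeneous but not linear, $\mathcal{M}^+_{\lambda,\Lambda}(D^2 h)=\mathcal{M}^+_{\lambda,\Lambda}(b\,D^2\psi)$ equals $b\,\mathcal{M}^+_{\lambda,\Lambda}(D^2\psi)$ only when $b\ge 0$; for $b<0$ it would become $-|b|\,\mathcal{M}^-_{\lambda,\Lambda}(D^2\psi)$, which does not obviously have the right sign. Fortunately $b=\dfrac{m_u(r)-m_u(R)}{r^{-\tilde n}-R^{-\tilde n}}\ge 0$ automatically, because $m_u$ is nonincreasing (it is an infimum over increasing sets), but this needs to be said. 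Second, the parenthetical remark in your first paragraph that $x_n|x|^{-\tilde n}$ is ``$|x|^{2-\tilde n}$ after differentiating in $x_n$'' should be dropped as a justification — as you yourself observe later, differentiating a solution of a fully nonlinear equation does not produce a solution, so the subsolution property of $\psi$ must come from the direct eigenvalue computation, not from this heuristic. With those clarifications your outline is a correct reconstruction of Leoni's proof.
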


To prove the Liouville-type theorem in (\ref{pus}) for the critical
case
$$
\frac{2(p+1)}{pq-1}=\tilde{n}-1, \ \mbox{and} \ \
\frac{2(q+1)}{pq-1}=\tilde{n}-1,
$$
we will compare the supersolutions $u, v$ with an explicit
subsolution of the equation
$$ -\mathcal{M}^+_{\lambda, \Lambda}(D^2\phi)=(
\frac{x_n}{|x|^{\tilde{n}}})^{\frac{\tilde{n}+1}{\tilde{n}-1}}.$$
Such a subsolution is constructed as follows.
\begin{lemma}
There exist positive constants $e,f>0$ and $r_0\geq 1$, which only
depend on $\lambda, \Lambda$ and $n$ such that the function
$$\Gamma(x)=\frac{x_n}{|x|^{\tilde{n}}}(e ln|x|+f
(\frac{x_n}{|x|})^2)$$ satisfies
\begin{equation}
-\mathcal{M}^+_{\lambda,\Lambda}(D^2\Gamma)\leq
(\frac{x_n}{|x|^{\tilde{n}}})^{\frac{\tilde{n}+1}{\tilde{n}-1}}
\quad \mbox{in} \  \mathbb R^n_+\backslash \mathbb B_{r_0}
\label{pur}
\end{equation}
 in the classical sense.
 \label{lll}
\end{lemma}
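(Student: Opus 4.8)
The plan is to compute $-\mathcal{M}^+_{\lambda,\Lambda}(D^2\Gamma)$ explicitly on $\mathbb{R}^n_+\setminus\mathbb{B}_{r_0}$ by exploiting the special algebraic form of $D^2\Gamma$, then choose $e,f,r_0$ so that the resulting expression is dominated by $(x_n/|x|^{\tilde n})^{(\tilde n+1)/(\tilde n-1)}$. First I would introduce the convenient variables $r=|x|$ and $t=x_n/|x|\in(0,1]$, so that $\Gamma = r^{1-\tilde n}\,t\,(e\ln r + f t^2)$, and observe that $\Gamma$ is a function of $r$ and the single angular variable $t$. Writing $\omega = e_n$ (a fixed unit vector) and $\nu = x/|x|$, a direct differentiation shows that $D^2\Gamma$ has the form
\begin{equation*}
D^2\Gamma = a_1\,\nu\otimes\nu + a_2\,\omega\otimes\omega + a_3(\nu\otimes\omega+\omega\otimes\nu) + a_4 I_n,
\end{equation*}
where $a_1,a_2,a_3,a_4$ are explicit functions of $r$ and $t$ (involving $\ln r$, powers of $r$, and powers of $t$), and $\nu\cdot\omega = t$. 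This is exactly the matrix treated in Lemma \ref{tec}, so its eigenvalues are $a_4$ (with multiplicity $n-2$) together with the two simple eigenvalues $a_4 + \frac{(a_1+a_2+2a_3 t)\pm\sqrt{(a_1+a_2+2a_3 t)^2 + 4(1-t^2)(a_3^2-a_1a_2)}}{2}$. Then $\mathcal{M}^+_{\lambda,\Lambda}(D^2\Gamma)$ is assembled from these eigenvalues according to their signs, with weight $\Lambda$ on the positive ones and $\lambda$ on the negative ones.

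The key structural point — and the reason the logarithmic term is present — is that the leading-order behavior as $r\to\infty$ is tuned so that the "pure" radial profile $r^{1-\tilde n}t$ (without the log and without the $t^2$ correction) is $\mathcal{M}^+$-harmonic in a suitable sense off the axis: recall from Lemma \ref{kle} that $m_u(r)r^{\tilde n}$ is the natural monotone quantity, reflecting that $x_n|x|^{-\tilde n}$ is the fundamental-type solution. Consequently, when one differentiates $\Gamma$, the dominant contributions from the constant-in-$r$ part of the coefficient cancel in the combination defining $\mathcal{M}^+$, and what survives is of order $r^{-1-\tilde n}$ (one power of $r^{-1}$ down), carrying an overall factor comparable to $t$ times a polynomial in $t$ with coefficients linear in $e$ and $f$. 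One then checks that the exponent matches: $(x_n/|x|^{\tilde n})^{(\tilde n+1)/(\tilde n-1)} = t^{(\tilde n+1)/(\tilde n-1)}\, r^{-\tilde n(\tilde n+1)/(\tilde n-1)}$, and since $\tilde n\geq 2$ one has $\tilde n(\tilde n+1)/(\tilde n-1)\geq \tilde n+1$, so for $r\geq r_0$ large the right-hand side is of lower order in $r$ unless $\tilde n$ takes its borderline value — I would handle the threshold $\tilde n=2$ (where the two exponents in $r$ coincide) separately, and there the choice of $e,f$ must make the coefficient comparison, not just the order comparison, go through. The constants $e$ and $f$ are then fixed: $f$ is chosen first to control the sign of the eigenvalue combination near $t=1$ (where $1-t^2\to 0$ and the square root simplifies, by the last clause of Lemma \ref{tec}, giving eigenvalues $a_4$ and $a_4+a_1+a_2+2a_3$), and $e$ is chosen afterward, large relative to $f$, to dominate the remaining polynomial-in-$t$ terms uniformly for $t\in(0,1]$; finally $r_0\geq 1$ is taken large enough that all the lower-order-in-$r$ error terms are absorbed.

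The main obstacle I expect is the sign bookkeeping for the eigenvalues: $\mathcal{M}^+_{\lambda,\Lambda}$ is not linear, so one cannot simply add contributions — one must determine, as functions of $(r,t)$ in the region $r\geq r_0$, which of the three distinct eigenvalues are positive and which are negative, and this may genuinely change across the domain (for instance near the axis $t\to 1$ versus near the boundary $t\to 0$). The safe route is to obtain a one-sided bound: since $\mathcal{M}^+_{\lambda,\Lambda}(M)\leq \Lambda\,\mathrm{tr}(M^+) $ is awkward, instead use $\mathcal{M}^+_{\lambda,\Lambda}(M) = \sup_{A\in\mathcal{A}_{\lambda,\Lambda}} \mathrm{tr}(AM)$ together with the explicit eigenbasis to get an upper bound for $\mathcal{M}^+(D^2\Gamma)$ valid on all of $\mathbb{R}^n_+\setminus\mathbb{B}_{r_0}$ by splitting into the finitely many sign-regimes and checking each; equivalently, bound each eigenvalue above by $\Lambda\cdot(\text{its positive part})$ plus $\lambda\cdot(\text{its negative part})$ crudely and then verify the desired inequality survives. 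Carrying this out is a somewhat lengthy but elementary computation once the reduction via Lemma \ref{tec} is in place; the conceptual content is entirely in recognizing the $a_1\nu\otimes\nu+a_2\omega\otimes\omega+\cdots$ structure and in the exponent-matching that forces the precise form $e\ln|x| + f(x_n/|x|)^2$.
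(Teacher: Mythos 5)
Your plan attacks $D^2\Gamma$ as a whole via Lemma \ref{tec} and then worries, correctly, about the nonlinearity of $\mathcal{M}^+_{\lambda,\Lambda}$: since the operator is not additive, the sign pattern of the three distinct eigenvalues of the full Hessian can change with $(r,t)$, and you leave that case analysis as an acknowledged obstacle rather than resolving it. This is precisely the gap. The paper never faces that problem, because it splits $\Gamma = e\Gamma_1 + f\Gamma_2$ with $\Gamma_1 = x_n|x|^{-\tilde n}\ln|x|$ and $\Gamma_2 = x_n^3|x|^{-\tilde n-2}$ and invokes the super-/sub-additivity of the Pucci operators,
\begin{equation*}
\mathcal{M}^+_{\lambda,\Lambda}(A+B) \;\geq\; \mathcal{M}^+_{\lambda,\Lambda}(A) + \mathcal{M}^-_{\lambda,\Lambda}(B),
\end{equation*}
hence $-\mathcal{M}^+_{\lambda,\Lambda}(D^2\Gamma) \leq -e\,\mathcal{M}^+_{\lambda,\Lambda}(D^2\Gamma_1) - f\,\mathcal{M}^-_{\lambda,\Lambda}(D^2\Gamma_2)$. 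Each summand then has a Hessian of the $a_1\nu\otimes\nu + a_2\omega\otimes\omega + a_3(\nu\otimes\omega+\omega\otimes\nu) + a_4 I_n$ form, Lemma \ref{tec} gives its eigenvalues separately, and for $\Gamma_1$ one checks that for $|x|\geq r_0$ exactly one eigenvalue is nonnegative and the rest nonpositive (so the Pucci sup is assembled with no ambiguity), while for $\Gamma_2$ the paper simply quotes the estimate already established in \cite{L}. The constants $e,f$ then drop out as explicit ratios $e=c_3/(c_2 c_1)$, $f=1/c_2$ of the three constants $c_1,c_2,c_3$ that appear. Without the superadditivity reduction, your route requires analyzing a genuinely harder Hessian, and the ``finitely many sign-regimes'' step is not substantiated; this is the missing idea, not merely a missing computation.

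A secondary but symptomatic slip: you write $(x_n/|x|^{\tilde n})^{(\tilde n+1)/(\tilde n-1)} = t^{(\tilde n+1)/(\tilde n-1)}\,r^{-\tilde n(\tilde n+1)/(\tilde n-1)}$ and conclude the right-hand side of \eqref{pur} is of lower order in $r$ except in a borderline case. In fact $x_n/|x|^{\tilde n} = t\,r^{1-\tilde n}$ with $t=x_n/|x|$, so the $r$-exponent is $-(\tilde n+1)$, which is \emph{exactly} the $r$-order produced by both $-e\mathcal{M}^+(D^2\Gamma_1)$ and $-f\mathcal{M}^-(D^2\Gamma_2)$ (the $\ln r$ contributions cancel in the favorable direction). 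The comparison in the lemma is therefore not an order comparison in $r$ at all; it is a comparison of the angular factors, $t^3 \leq t^{(\tilde n+1)/(\tilde n-1)}$ for $t\in(0,1]$, which holds precisely because $\tilde n\geq 2$. Your framing of the final step as ``lower order in $r$ for $r\geq r_0$ large'' would not close the argument.
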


\begin{proof}
We consider
$$\Gamma_1(x):=\frac{x_n}{|x|^{\tilde{n}}} ln|x|$$
and
$$ \Gamma_2(x):=\frac{x_n^3}{|x|^{\tilde{n}+2}}.$$
Then $\Gamma(x)=e \Gamma_1(x)+f\Gamma_2(x).$ From the property of
the Pucci maximal operator, it yields that
\begin{equation}
-\mathcal{M}^+_{\lambda,\Lambda}(D^2\Gamma)\leq -e
\mathcal{M}^+_{\lambda,\Lambda}(D^2\Gamma_1)-f\mathcal{M}^-_{\lambda,\Lambda}(D^2\Gamma_2).
\label{prop}
\end{equation}
In order to obtain (\ref{pur}), we estimate the terms on the right
hand side of (\ref{prop}), respectively. As far as $\Gamma_1$ is
concerned, direct calculations show that
\begin{equation}
\begin{array}{ll}
D^2\Gamma_1(x)=&\frac{x_n}{|x|^{\tilde{n}+2}}\{[(\tilde{n}+2)\tilde{n}ln|x|-2(\tilde{n}+1)]
\frac{x}{|x|}\otimes\frac{x}{|x|}+(1-\tilde{n}ln|x|)e_n\otimes e_n
\nonumber\\
\\ &
+(1-\tilde{n}ln|x|)\frac{|x|}{x}(\frac{x}{|x|}\otimes
e_n+e_n\otimes\frac{x}{|x|})-(\tilde{n}ln|x|-1)I_n \}.\nonumber\\
\end{array}
\end{equation}
Recall that $\tilde{n}=\frac{\lambda}{\Lambda}(n-1)+1$. According to
Lemma \ref{tec}, the eigenvalue $\mu_1, \mu_2, \cdots, \mu_n$ of
$D^2\Gamma_1$ are
$$
\mu_1=\frac{x_n}{|x|^{\tilde{n}+2}}\frac{\tilde{n}^2ln|x|-3\tilde{n}ln|x|-2\tilde{n}+3+\sqrt{D}}{2},
$$
$$
\mu_2=\frac{x_n}{|x|^{\tilde{n}+2}}\frac{\tilde{n}^2ln|x|-3\tilde{n}ln|x|-2\tilde{n}+3-\sqrt{D}}{2},
$$
$$\mu_i=-\frac{x_n}{|x|^{\tilde{n}+2}}(\tilde{n}ln|x|-1), \quad
3\leq i\leq n,$$ where
\begin{equation}
\begin{array}{lll}
D
&=&[\tilde{n}(\tilde{n}+2)ln|x|-2(\tilde{n}+1)+3(1-\tilde{n}ln|x|)]^2
\nonumber \\
\\
&&+4(1-\frac{x_n^2}{|x|^2})\{(1-\tilde{n}ln|x|)^2\frac{|x|^2}{x_n^2}-
[(\tilde{n}+2)\tilde{n}ln|x|-2(\tilde{n}+1)](1-\tilde{n}ln|x|)\}
\nonumber
\\
\\
&\geq & [(\tilde{n}+2)(\tilde{n}ln|x|-2)+3(1-\tilde{n}ln|x|)]^2 \nonumber \\
\\
 &&+4(1-\frac{x_n^2}{|x|^2})\{(1-\tilde{n}ln|x|)^2\frac{|x|^2}{x_n^2}-(\tilde{n}+2)(\tilde{n}ln|x|-2)(1-\tilde{n}ln|x|)\}
 \nonumber \\
 \\
&\geq &
[(\tilde{n}ln|x|-2)(\tilde{n}-1)]^2+4(1-\frac{x_n^2}{|x|^2})(\tilde{n}ln|x|-2)^2[\frac{|x|^2}{x_n^2}+(\tilde{n}+2)].\\
\end{array}
\end{equation}
Hence
$$\sqrt{D}\geq (\tilde{n}ln|x|-2)(\tilde{n}-1).$$
For $r>r_0$, it follows that $\mu_1\geq 0$ and $\mu_i\leq 0$ for
$2\leq i\leq n$, where $r_0$ depends on $\Lambda$, $\lambda$ and
$n$. Therefore, one has
\begin{equation}
\begin{array}{lll}
\mathcal{M}^+_{\lambda,\Lambda}(D^2\Gamma_1)&=&\Lambda\mu_1+\lambda\sum^{n}_{i=2}\mu_i
\nonumber \\
\\
&=&\frac{x_n}{|x|^{\tilde{n}+2}}\{\frac{(\Lambda+\lambda)(\tilde{n}^2ln|x|-3\tilde{n}ln|x|-2\tilde{n}+3)+
(\Lambda-\lambda)\sqrt{D}}
{2} \nonumber \\
\\
&&-(n-2)\lambda(\tilde{n}ln|x|-1)\} \nonumber \\
\\
&\geq&
\frac{x_n}{|x|^{\tilde{n}+2}}\frac{(\Lambda+\lambda)(-2\tilde{n}+3)-2(\Lambda-\lambda)(\tilde{n}-1)+2(n-2)\lambda}{2}
\\
\\
&=&-\frac{x_n}{|x|^{\tilde{n}+2}}\frac{2\lambda
n-\Lambda-\lambda}{2} \nonumber \\ \\ & =& -c_1
\frac{x_n}{|x|^{\tilde{n}+2}},
\end{array}
\end{equation}
where $c_1=\frac{2\lambda n-\Lambda-\lambda}{2}$. Since
$\tilde{n}=\frac{\lambda}{\Lambda}(n-1)+1\geq 2$, we get $c_1> 0$.
By the argument in Theorem 2.3 in \cite{L}, we have
\begin{equation}
\begin{array}{lll}
\mathcal{M}^-_{\lambda,\Lambda}(D^2\Gamma_2)&\geq& \frac{\lambda
x_n^3}{|x|^{\tilde{n}+4}}\{(\tilde{n}+2)[\tilde{n}-3-\frac{\Lambda}{\lambda}(n-1)]
+3(3-\frac{\Lambda}{\lambda})\frac{|x|^2}{x_n^2}\} \nonumber \\
\\&\geq&\frac{\lambda
x_n^3}{|x|^{\tilde{n}+4}}\{\tilde{n}[\tilde{n}-3-\frac{\Lambda}{\lambda}(n-1)]+2[\tilde{n}-\frac{\Lambda}{\lambda}(n-1)]
+3(1-\frac{\Lambda}{\lambda})\frac{|x|^2}{x_n^2}\}
\nonumber \\
\\&= & \frac{\lambda
x_n^3}{|x|^{\tilde{n}+4}}\{\tilde{n}(\frac{\lambda}{\Lambda}-\frac{\Lambda}{\lambda})(n-1)-2\frac{\Lambda}{\lambda}(n-1)
+3(1-\frac{\Lambda}{\lambda})\frac{|x|^2}{x_n^2}\}
\nonumber \\
\\&\geq & -\frac{
x_n^3}{|x|^{\tilde{n}+4}}\{c_2-c_3\frac{|x|^2}{x_n^2}\},
\end{array}
\end{equation}
where
$c_2=\tilde{n}(\frac{\Lambda^2-\lambda^2}{\Lambda})(n-1)+2\Lambda(n-1)$
and $c_3=3(\Lambda-\lambda)$. Then setting $f=c_2^{-1}$ and
$e=\frac{c_3}{c_2 c_1}$, we obtain

\begin{equation}
\begin{array}{lll}
-\mathcal{M}^+_{\lambda,\Lambda}(D^2\Gamma)&\leq&
-e\mathcal{M}^+_{\lambda,\Lambda}(D^2\Gamma_1)-f\mathcal{M}^-_{\lambda,\Lambda}(D^2\Gamma_2)
\nonumber \\
\\&\leq& ec_1\frac{x_n}{|x|^{\tilde{n}+2}}+fc_2\frac{
x_n^3}{|x|^{\tilde{n}+4}}-fc_3\frac{ x_n}{|x|^{\tilde{n}+2}}
\nonumber \\
\\&\leq& \frac{ x_n^3}{|x|^{\tilde{n}+4}},
\end{array}
\end{equation}
Furthermore, since $\tilde{n}\geq 2$, a direct calculation yields that
\begin{equation}
\begin{array}{lll}
 -\mathcal{M}^+_{\lambda,\Lambda}(D^2\Gamma)&\leq&  \frac{
x_n^3}{|x|^{\tilde{n}+4}} \nonumber \\
\\&\leq&(\frac{x_n}{|x|^{\tilde{n}}})^{\frac{\tilde{n}+1}{\tilde{n}-1}}.
\end{array}
\end{equation}
Hence the lemma is completed.
\end{proof}

Now we present the proof of Theorem \ref{th4}. Our idea is inspired
by the work in \cite{L}.
\begin{proof}[Proof of Theorem \ref{th4}]
By the strong maximal principle (i.e. Lemma \ref{hopf}), we may
assume that $u, v>0$ in $\mathbb R^n_+$. Let us rescale the
supersolutions in ($\ref{pus}$). For every $r>0$, we set
$$u_r(x)=u(rx),$$
$$v_r(x)=v(rx).$$
Then $u_r, v_r>0$ are supersolutions for
\begin{equation}
\left \{ \begin{array}{ll}
\mathcal{M}^+_{\lambda, \Lambda}(D^2u_r)+r^2v_r^p=0   \quad \quad  &\mbox{in}\ \mathbb R^n_+, \\
\mathcal{M}^+_{\lambda, \Lambda}(D^2v_r)+r^2u_r^q=0   \quad \quad
&\mbox{in}\ \mathbb
R^n_+. \\
\end{array}
\right. \label{pus1}
\end{equation}
Next we will choose appropriate test functions for supersolutions
$u_r, v_r$. Selecting a smooth, concave, nonincreasing function:
$\eta: [0, \ +\infty)\to R$ satisfying
\begin{equation}
\eta(t)=\left \{\begin{array}{ll} 1 \quad \quad &\mbox{for} \ 0\leq
t\leq 1/2, \\
>0 & \mbox{for} \ \   1/2<t<3/4, \\
\leq 0 &\mbox{for} \ t\geq 3/4.
\end{array}
\right.
\end{equation}
Fixed a point $a=(0,1)$. Here $\mathbb B_r(a)$ is a ball centered at
$a$ with radius $r$. Let
$$U(x)=(\inf_{\mathbb B_{1/2}(a)} u_r)\eta(|x-a|),$$
$$V(x)=(\inf_{\mathbb B_{1/2}(a)} v_r)\eta(|x-a|).$$
It is easy to see that $u_r\geq U$ in $\overline{\mathbb
B_{1/2}}(a)$, $u_r=U$ at some point on $\partial \mathbb B_{1/2}(a)$
by the maximum principle (i.e. Lemma \ref{com}) and $u_r> U$ outside
$\mathbb B_{3/4}(a)$. By the same observation, $v_r\geq V$ in
$\overline{\mathbb B_{1/2}}(a)$, $v_r=V$ at some point on $\partial
\mathbb B_{1/2}(a)$ and $v_r> V$ outside $\mathbb B_{3/4}(a)$.
Therefore, the infimum of $u_r-U, v_r-V$ is non-positive and
achieved at $x_1, x_2$ in $\mathbb B_{3/4}(a)\backslash\mathbb
B_{1/2}(a)$, respectively. From the definition of a viscosity solution
and taking into account that $U, V$ are test functions for $u_r,
v_r$, respectively, it yields that
\begin{equation}v_r^p(x_1)\leq \frac{C_1}{r^2} \inf_{\mathbb
B_{1/2}(a)} u_r \label{es1}
\end{equation}
and
\begin{equation}
u_r^q(x_2)\leq \frac{C_1}{r^2} \inf_{\mathbb B_{1/2}(a)} v_r,
\label{es2}
\end{equation} where
$$C_1=\sup_{\mathbb B_{3/4}(a)}(-\mathcal{M}^+_{\lambda, \Lambda}(D^2\eta))=\sup_{\mathbb B_{3/4}(a)}
(-\lambda\triangle\eta)=-\lambda\inf_{t\in [1/2, \
3/4]}(\eta''(t)+(n-1) t^{-1}\eta').
$$
Since $u_r(x)$ and $ v_r(x)$ are also supersolutions for
$\mathcal{M}^+_{\lambda, \Lambda}(D^2 u_r)=0$ and
$\mathcal{M}^+_{\lambda, \Lambda}(D^2 v_r)=0$, respectively, the
monotonicity property ( see \cite{CL} ) implies that
\begin{equation}
\inf_{\mathbb B_{1/2}(a)}u_r\leq C \inf_{\mathbb B_{3/4}(a)}u_r,
\label{es3}
\end{equation}
\begin{equation}
\inf_{\mathbb B_{1/2}(a)}v_r\leq C \inf_{\mathbb B_{3/4}(a)}v_r.
\label{es4}
\end{equation}
Furthermore, From (\ref{es1})-(\ref{es4}), we get
$$
(\inf_{\mathbb B_{3/4}(a)}v_r)^p\leq v_r^p(x_1)\leq \frac{C_1}{r^2}
\inf_{\mathbb B_{1/2}(a)} u_r\leq
 \frac{C}{r^2} \inf_{\mathbb B_{3/4}(a)}u_r\leq  \frac{C}{r^2} (\frac{C_1}{r^2}\inf_{\mathbb
 B_{1/2}(a)}v_r)^{\frac{1}{q}}\leq \frac{C}{r^{2(1+\frac{1}{q})}}(\inf_{\mathbb
 B_{3/4}(a)}v_r)^{\frac{1}{q}},$$
that is,
\begin{equation}
(\inf_{\mathbb B_{3/4}(a)}v_r)\leq \frac{C}{r^\frac{2(q+1)}{pq-1}}.
\label{cr1}
\end{equation}
Similar argument indicates that
$$
(\inf_{\mathbb B_{3/4}(a)}u_r)^q\leq u_r^q(x_1)\leq \frac{C_1}{r^2}
\inf_{\mathbb B_{1/2}(a)} v_r\leq
 \frac{C}{r^2} \inf_{\mathbb B_{3/4}(a)}v_r\leq \frac{C}{r^2} (\frac{C_1}{r^2}\inf_{\mathbb
 B_{1/2}(a)}u_r)^{\frac{1}{p}}\leq \frac{C}{r^{2(1+\frac{1}{p})}}(\inf_{\mathbb
 B_{3/4}(a)}u_r)^{\frac{1}{p}},$$
that is,
\begin{equation}
(\inf_{\mathbb B_{3/4}(a)}u_r)\leq \frac{C}{r^\frac{2(p+1)}{pq-1}}.
\label{cr2}
\end{equation}
If $pq=1$, A contradiction is obviously arrived. We readily infer
that $u\equiv v\equiv 0$.

While $pq>1$, we observe that
\begin{equation}
\inf_{\mathbb B_{3/4}(a)}v_r=\inf_{\mathbb B_{3r/4}(ar)}v\geq
\frac{r}{4}\inf_{\mathbb
B_{3r/4}(ar)}\frac{v}{x_n}\geq\frac{r}{4}\inf_{\mathbb
B_{2r}}\frac{v}{x_n}=\frac{r}{4}m_v(2r), \label{pq2}
\end{equation}
\begin{equation}
\inf_{\mathbb B_{3/4}(a)}u_r=\inf_{\mathbb B_{3r/4}(ar)}u\geq
\frac{r}{4}\inf_{\mathbb
B_{3r/4}(ar)}\frac{u}{x_n}\geq\frac{r}{4}\inf_{\mathbb
B_{2r}}\frac{u}{x_n}=\frac{r}{4}m_u(2r). \label{pq1}
\end{equation}
From (\ref{cr1}) and (\ref{pq2}), we obtain
\begin{equation}r^{\tilde{n}}m_v(r)\leq
\frac{C}{r^{\frac{2(q+1)}{pq-1}+1-\tilde{n}}}. \label{tt3}
\end{equation}
 By (\ref{cr2}) and (\ref{pq1}), we have
\begin{equation}
r^{\tilde{n}}m_u(r)\leq
\frac{C}{r^{\frac{2(p+1)}{pq-1}+1-\tilde{n}}}. \label{tt1}
\end{equation} If
$$\frac{2(p+1)}{pq-1}>\tilde{n}-1 \ \ \mbox{or} \ \
\frac{2(q+1)}{pq-1}>\tilde{n}-1,$$ then $r^{\tilde{n}}m_u(r)\to 0$
or $r^{\tilde{n}}m_v(r)\to 0$ as $r\to\infty$. Hence Lemma \ref{kle}
shows that $u\equiv 0$ or $v\equiv 0$. From the structure of fully nonlinear
elliptic equation systems, we obtain $u\equiv 0$  and  $v\equiv 0$
in either of the cases.

Next we study the critical case that
$$\frac{2(p+1)}{pq-1}=\tilde{n}-1 \ \ \mbox{and} \ \
\frac{2(q+1)}{pq-1}=\tilde{n}-1 .$$ It is easy to check that
$p=q=\frac{\tilde{n}+1}{\tilde{n}-1}$. In this case,  (\ref{tt3})
and (\ref{tt1}) become
\begin{equation}
r^{\tilde{n}}m_v(r)\leq C \quad \quad \forall r>0 \label{fuk}
\end{equation}
and
\begin{equation}
r^{\tilde{n}}m_u(r)\leq C \quad \quad \forall r>0. \label{tur}
\end{equation}
Thanks to the monotonicity property of $r^{\tilde{n}}m_u(r)$ in
Lemma \ref{kle},
$$ r^{\tilde{n}}m_u(r)\geq r_0^{\tilde{n}}m_u(r_0) \quad \
\mbox{for} \ r\geq r_0.$$ Then
\begin{equation}
u(x)\geq C \frac{x_n}{r^{\tilde{n}}} \quad \ \mbox{for} \  x\in
\mathbb R^n_+\backslash \mathbb B_{r_0}. \label{tll}
\end{equation}

With the aid of (\ref{tll}),
\begin{equation}
-\mav\geq
C(\frac{x_n}{r^{\tilde{n}}})^\frac{\tilde{n}+1}{\tilde{n}-1}, \quad
\ \forall x\in\mathbb R^n_+\backslash \mathbb B_{r_0}.
\end{equation}
Taking into account of  Lemma \ref{lll},
\begin{equation}
-\mathcal{M}^+_{\lambda, \Lambda}(D^2(\gamma \Gamma))\leq -\mav
\end{equation}
is satisfied by appropriately chosen  $\gamma$. Choosing $$\gamma\leq
m_v(r_0)\frac{r_0^{\tilde{n}_1}}{e ln r_0+f},$$ we have
$$\gamma\Gamma(x)\leq v(x) \quad \ \mbox{on} \ \partial\mathbb
B_{r_0}.$$

Fixed any $\epsilon>0$, let $R>0$ be so large that
$$\gamma\Gamma(x)\leq \epsilon \quad \ \mbox{for} \ \mathbb
R^n_+\backslash \mathbb B_R.$$

The comparison principle in Lemma \ref{com} for $\gamma\Gamma(x)$
and $v(x)+\epsilon$ in $\mathbb B_R\backslash\mathbb B_{r_0}$ shows
that
$$\gamma\Gamma(x)\leq v(x)+\epsilon.$$ In addition, let $R\to \infty$
and then $\epsilon\to 0$, we have
 $$\gamma\Gamma(x)\leq v(x) \quad \ \forall x\in
 \mathbb R^n_+\backslash \mathbb B_{r_0}.$$
 From the explicit form of $\Gamma(x)$,
$$v(x)\geq C\frac{x_n}{|x|^{\tilde{n}}}ln|x| \quad \ \forall x\in
 \mathbb R^n_+\backslash \mathbb B_{r_0},$$
 which implies that
 $$m_v(r) r^{\tilde{n}}\geq Cln r   \quad \ \forall r\geq r_0.$$ It contradicts the bound in (\ref{fuk}).
 The theorem is thus accomplished.

\end{proof}

The proof of Corollary \ref{cor2} is the consequence of the above
arguments and estimates in \cite{L}. We omit it here.

\end{document}